\newcommand{\setword}[2]{%
	\phantomsection
	#1\def\@currentlabel{\unexpanded{#1}}\label{#2}%
}
\newtheorem{thm}{Theorem}[section]
\newtheorem{lem}[thm]{Lemma}
\newtheorem{cor}[thm]{Corollary}
\newtheorem{rem}[thm]{Remark}
\newtheorem{rems}[thm]{Remarks}
\newtheorem{clm}[thm]{Claim}
\newtheorem{prop}[thm]{Proposition}
\newtheorem{df}[thm]{Definition}
\newtheorem{ex}[thm]{Example}
\newtheorem{symb}[thm]{Notation}
\newtheorem{symbs}[thm]{Notations}
\newcommand{\wh}{\widehat}
\newcommand{\wt}{\widetilde}
\newcommand{\wc}{\widecheck}
\newcommand{\vS}{\varSigma}
\newcommand{\vO}{\varOmega}
\newcommand{\vT}{\varTheta}
\newcommand{\ov}{\overline}
\newcommand{\mf}{\mathfrak}
\newcommand{\vY}{\varUpsilon}
\newcommand{\uph}{\upharpoonright}
\newcommand{\R}{\mathbb{R}}
\newcommand{\T}{\mathbb{T}}
\newcommand{\N}{\mathbb{N}}
\newcommand{\F}{\mathcal{F}}
\newcommand{\D}{\mathcal{D}}
\newcommand{\B}{\mathfrak{B}} 
\newcommand{\M}{\mathcal{M}} 
\newcommand{\G}{\mathcal{G}} 
\newcommand{\C}{\mathcal{C}} 
\newcommand{\E}{\mathbb{E}}
\newcommand{\al}{\alpha}
\newcommand{\ga}{\gamma}
\newcommand{\be}{\beta}
\newcommand{\ttheta}{\rho(\theta)}
\newcommand{\Ttheta}{\rho(\vT)}
\begin{document}

 \title{A characterization of progressively equivalent probability measures preserving the structure of a    compound mixed renewal process}

 	   \author{S.~M. Tzaninis}
 	\address{Department of Statistics and Insurance Science\\ University of
 		Piraeus\\ 80 Karaoli and Dimitriou str.\\ 185 34 Piraeus\\ Greece}
 	\email{stzaninis@unipi.gr}
 	\thanks{}
 	
 	\author{N.~D. Macheras}
 	\address{Department of Statistics and Insurance Science\\ University of
 		Piraeus\\ 80 Karaoli and Dimitriou str.\\ 185 34 Piraeus\\ Greece} 
 	\email{macheras@unipi.gr} 
 	\thanks{}

\date{\today}

\begin{abstract}
Generalizing earlier works of Delbaen \& Haezendonck \cite{dh} as well as of \cite{mt3} and  \cite{lm3} for given compound mixed renewal process $S$ under a probability measure $P$, we characterize all those probability measures $Q$ on the domain of $P$ such that $Q$ and $P$ are  progressively equivalent  and $S$ remains a compound mixed renewal process under $Q$ with improved properties. As a consequence, we prove that any compound mixed renewal process can be converted into a compound mixed Poisson process through a change of measures. Applications related to the ruin problem and to the computation of premium calculation principles in an insurance market without arbitrage opportunities are discussed in \cite{t1} and \cite{mt6a}, respectively.
\smallskip

\noindent
{\bf{Key Words}:} {\rm Compound mixed renewal process, Change of measures, Martingale,  Progressively equivalent   measures, Regular conditional probabilities, Ruin probability}.
\smallskip

\noindent
{\bf AMS Subject Classification (2010):} \noindent Primary 60G55, 91B30 ; secondary 28A35, 60A10, 60G44, 60K05.
\smallskip
\end{abstract}
\maketitle
\section{Introduction}\label{intro}
A basic method in mathematical finance is to replace the
original probability measure with an equivalent martingale measure, sometimes called
a risk-neutral measure. This measure is used for pricing and hedging given contingent
claims (e.g. options, futures, etc.).   In contrast to the situation of the classical Black-Scholes option pricing formula, where the equivalent martingale measure is unique, in actuarial mathematics that is certainly not the case.

The above fact was pointed out by Delbaen \& Haezendonck in their pioneering paper \cite{dh}, as the authors ``tried to create a mathematical framework to deal with finance related to risk processes" in the frame of classical Risk Theory. Thus, they were confronted with the problem of characterizing all equivalent martingale measures $Q$ such that a compound Poisson process under an original measure $P$ remains a compound Poisson one under $Q$. Delbaen \& Haezendonck answered to the positive the previous problem in \cite{dh}, and applied their results to the theory of premium calculation principles (see also \cite{em} for an overview). The method provided by \cite{dh} has been successfully applied to many areas of insurance mathematics such as pricing (re-)insurance contracts (Holtan \cite{ho}, Haslip \&  Kaishev \cite{haka}), simulation of ruin probabilities (Boogaert \& De  Waegenaere \cite{bw}), risk capital allocation (Yu et al. \cite{yu}), pricing CAT derivatives (Geman \& Yor \cite{gy}, Embrechts \& Meister \cite{emme}). 

However, there is one vital point about the (compound) Poisson processes which is
their greatest weakness as far as practical applications are considered, and this is the fact that the variance is a linear function of time $t$. The latter consideration, together with  some interesting real-life cases of interest in Risk Theory, which show that the interarrival times process associated with a counting  process remain independent but  the exponential interarrival time distribution does not fit well into the observed data (cf. e.g. Chen et al. \cite{chen} and Wang et al. \cite{wang})  motivated Macheras \& Tzaninis \cite{mt3} to generalize the  Delbaen \& Haezendonck characterization for the more general renewal risk model  (also known as the Sparre Andersen model). 

In reality, risk portfolios are inhomogeneous and they can be seen as a mixture of smaller homogeneous portfolios which can be identified by the realization of a random variable (or a random vector) $\vT$. The latter interpretation leads to the notion of mixed counting processes. Based on that Meister \cite{me}, and latter on (in a more general setup) Lyberopoulos \& Macheras \cite{lm3}, generalized the  Delbaen \& Haezendonck characterization for the more general class of compound mixed Poisson processes, and applied their results to pricing CAT futures and to the theory of premium calculation principles, respectively.

Even though (compound) mixed Poisson processes provide a better practical model than the (compound) Poisson ones, their variance includes a quadratic term $t^2$, which is of importance only for large values of $t$. The latter, together with the fact that any (compound) mixed Poisson process is a Markov one, indicates the need for a proper generalization. In an effort to overcome these deficiencies, to allow more fluctuation  and to step away from a Markovian environment, we consider the mixed renewal risk model. Since the mixed renewal risk model is strictly more comprising than the mixed Poisson one,  the question  whether the  corresponding characterizations provided in  Delbaen \& Haezendonck \cite{dh}, Lyberopoulos \& Macheras \cite{lm3} and \cite{mt3}    can be extended to the more general mixed renewal risk model naturally arises, and it is precisely this problem the paper deals with. In particular, if the process $S$  is a compound mixed renewal process under the probability measure $P$, it would be interesting to characterize all probability measures $Q$ being progressively equivalent to $P$ and converting $S$ into a compound mixed Poisson process under $Q$.

Since conditioning is involved in the definition of compound mixed renewal processes (CMRPs for short), it is natural to expect that the notion of regular conditional probabilities plays an essential role for the investigation of CMRPs. For this reason, we first give in Section \ref{CRPPEM} a characterization of CMRPs via regular conditional probabilities, see Proposition \ref{crp}, which serves as a useful preparatory tool for the proofs of our results. Next, we prove the one direction of the desired characterization, see Proposition \ref{thm1}, which provides a characterization and an explicit calculation of Radon-Nikod\'{y}m derivatives $dQ/dP$ for well known cases in insurance mathematics, see Example \ref{ex1}. The arguments used in the proof of Proposition \ref{thm1} differ from those in the proofs of \cite{dh}, Proposition 2.1, and \cite{lm3}, Proposition 3.4, respectively, as a (mixed) renewal process does not have in general (conditionally) stationary and independent increments. Furthermore, it is worth noticing that we prove our results avoiding a well-known backward Markovisation technique appearing in e.g. \cite{scm}, Section 8.3.1.

 In Section \ref{char}, we first construct the canonical probability spaces admitting CMRPs, see Proposition \ref{prop1}. The latter result along with Proposition \ref{thm1}, is required for the desired characterization, in terms of regular conditional probabilities, of all those measures $Q$ which are progressively equivalent to an original probability measure $P$, such that a CMRP under $P$ remains a CMRP under $Q$, see Theorem \ref{thm!}. Note that the main results of \cite{mt3}, Theorem 3.1, and of \cite{lm3}, Theorem 4.3, follow as special instances of Theorem \ref{thm!}, see Remarks \ref{rem2} and \ref{rem3}, respectively. Another consequence of Theorem \ref{thm!} is that any CMRP can be converted into a compound mixed Poisson one through a change of measures technique, by choosing the ``correct" Radon-Nikod\'{y}m derivative, see Corollary \ref{cor1}. For  applications of Theorem \ref{thm!} to a characterization of equivalent martingale measures for CMRPs, and to the pricing of actuarial risks (premium calculation principles)  in an insurance market possessing the property of no free lunch with vanishing risk we refer to \cite{mt6a}. Finally, for applications of Theorem \ref{thm!} to the ruin problem for CMRPs we refer to \cite{t1}.  

\section{Preliminaries}\label{prel}

{\em Throughout this paper, unless stated otherwise, $(\vO,\vS,P)$ is a fixed but arbitrary probability space}.  The symbol  $\mathcal L^{1}(P)$   stands for the family of all real-valued $P$--integrable    functions on $\vO$. Functions that are $P$--a.s. equal are not identified. We denote by  $\sigma(\mathcal G)$  the $\sigma$--algebra generated by a family $\mathcal G$ of subsets of $\vO$. Given a topology  $\mf{T}$ on $\vO$ we write ${\mf B}(\vO)$ for its {\bf Borel $\sigma$--algebra} on $\vO$, i.e. the $\sigma$--algebra generated by $\mf{T}$. Our measure theoretic terminology is standard and generally follows  \cite{Co}.   For the definitions of real-valued random variables and random variables we refer to  \cite{Co}, p. 308.   We apply   notation $P_{X}:=P_X(\theta):={\bf{K}}(\theta)$ to mean that $X$ is distributed according to the law ${\bf{K}}(\theta)$, where $\theta\in D\subseteq\R^d$ ($d\in\N$)  is the parameter of the distribution. We denote again by ${\bf K}(\theta)$ the distribution function induced by the probability distribution ${\bf K}(\theta)$. Notation  ${\bf Ga}(b,a)$, where $a,b\in(0,\infty)$, stands for the law  of gamma  distribution (cf. e.g. \cite{Sc}, p. 180). In particular, ${\bf Ga}(b,1)={\bf Exp}(b)$ stands for the law of exponential distribution. For  two real-valued random variables $X$ and $Y$ we write $X=Y$ $P$--a.s. if $\{X\neq Y\}$ is a $P$--null set. If $A\subseteq\vO$, then $A^c:=\vO\setminus A$, while $\chi_A$ denotes the indicator (or characteristic) function of the set $A$.  For a map $f:D\longrightarrow E$ and for a non-empty set $A\subseteq D$ we denote  by $f\upharpoonright A$ the restriction of $f$ to $A$.  
 We write $\E_P[X\mid\F]$ for a version of a conditional expectation (under $P$) of $X\in\mathcal{L}^{1}(P)$ given a $\sigma$--subalgebra $\mathcal{F}$ of $\vS$. 
For $X:=\chi_E\in\mathcal{L}^1(P)$ with $E\in\vS$ we set $P(E\mid\mathcal{F}):=\E_P[\chi_E\mid\mathcal{F}]$. For the unexplained terminology of Probability and Risk Theory we refer to \cite{Sc}. \smallskip

Given two measurable spaces $(\vO,\vS)$ and $(\vY,H)$, a  function $k$ from $\vO\times H$ into $[0,1]$ is a {\bf $\vS$--$H$--Markov kernel} if it has the following properties:
\begin{itemize}
\item[{\bf(k1)}] The set-function $B\longmapsto k(\omega,B)$ is a probability measure on $H$ for any fixed $\omega\in\vO$.
\item[{\bf(k2)}] The function $\omega\longmapsto k(\omega,B)$ is $\vS$--measurable for any fixed $B\in{H}$.
\end{itemize}
In particular, given a real-valued random variable $X$ on $\vO$ and a $d$--dimensional random vector $\vT$ on $\vO$, a {\bf conditional distribution of $X$ over $\vT$} is a $\sigma(\vT)$--$\B$--Markov kernel denoted by $P_{X\mid\vT}:=P_{X\mid\sigma(\vT)}$ and satisfying for each $B\in\B$ condition
\[
P_{X\mid\vT}(\bullet,B)=P(X^{-1}[B]\mid\sigma(\vT))(\bullet)
\quad{P}\uph\sigma(\vT)\mbox{--a.s.}.
\]

Clearly, for every $\B_d$--$\B$--Markov kernel $k$, the map $K(\vT)$ from $\vO\times\B$ into $[0,1]$ defined by means of
\[ 
K(\vT)(\omega,B):=(k(\bullet,B)\circ\vT)(\omega)
\quad\mbox{for any}\;\;(\omega,B)\in \vO\times\B
\]
is a $\sigma(\vT)$--$\B$--Markov kernel. Then for $\theta=\vT(\omega)$ with $\omega\in\vO$ the probability measures $k(\theta,\bullet)$ are distributions on $\B$ and so we may write $\mathbf{K}(\theta)(\bullet)$ instead of $k(\theta,\bullet)$. Consequently, in this case $K(\vT)$ will be denoted by $\mathbf{K}(\vT)$.

For any real--valued random variables $X, Y$ on $\vO$ we say that $P_{X|\vT}$ and $P_{Y|\vT}$ are $P\upharpoonright\sigma(\vT)$--equivalent and we write $P_{X|\vT}=P_{Y|\vT}$ $P\upharpoonright\sigma(\vT)$--a.s.,  if there exists a $P$--null set $M\in\sigma(\vT)$ such that for any $\omega\notin M$ and $B\in \B$ the equality $P_{X|\vT}(\omega,B)=P_{Y|\vT}(\omega,B)$ holds true.\smallskip

A sequence $\{V_n\}_{n\in \N}$ of real-valued random variables on  $\vO$ is $P$--{\bf conditionally (stochastically) independent over $\sigma(\vT)$}, if for each $n\in\N$ with $n\geq 2$ we have
\[
P\big(\bigcap^{n}_{k=1} \{V_{k}\leq v_{k}\}\mid\sigma(\vT)\big)=\prod^{n}_{k=1} P\big(\{V_{k}\leq v_{k}\}\mid\sigma(\vT)\big)\quad P\upharpoonright\sigma(\vT)\mbox{--a.s.}.
\]
Furthermore, we say that  $\{V_n\}_{n\in \N}$ is {\bf $P$--conditionally identically distributed over $\sigma(\vT)$}, if 
\[
P\bigl(F\cap V_k^{-1}[B]\bigr)=P\bigl(F\cap V_m^{-1}[B]\bigr)
\]
whenever $k,m\in \N$, $F\in\sigma(\vT)$ and $B\in \B$.  We say that $\{V_n\}_{n\in \N}$ is
$P${\bf--conditionally (stochastically) independent or identically distributed given
	$\vT$}, if it is conditionally independent or identically distributed over the $\sigma$--algebra  $\sigma(\vT)$.\smallskip

{\em For the rest of the paper we simply write ``conditionally" in the place of ``conditionally given $\vT$” whenever conditioning refers to  $\vT$.}

\begin{rem} 
	\label{eqd}
	\normalfont
	If  the sequence $\{V_n\}_{n\in \N}$ is $P$--conditionally identically distributed, then it is  $P$--identically distributed.\smallskip
	
	In fact, for any $n\in\N$ and $B\in\B$ we have
	\[
	P_{V_n}(B)=\int P(V^{-1}_n[B]\mid\vT)\,dP=\int P(V^{-1}_1[B]\mid\vT)\,dP=P_{V_1}(B),
	\]
	where the second equality follows form the fact that $\{V_n\}_{n\in \N}$  is $P$--conditionally identically distributed.
\end{rem}

 {\em Henceforth,  unless stated otherwise, $(\vY,H):=((0,\infty),\B(\vY))$  and $\vT$ is a $d$--dimensional  random vector on $\vO$ with values on $D\subseteq\R^d$ ($d\in\N$).}

\section{Compound Mixed Renewal Processes and Progressively Equivalent Measures}\label{CRPPEM}

We first recall some additional background material, needed in this section.

A family $N:=\{N_t\}_{t\in\R_+}$ of  random variables from $(\vO,\vS)$ into $(\ov{\R}, \B(\ov\R))$ is called a {\bf counting process} if there exists a $P$--null set $\vO_N\in\vS$ such that the process $N$ restricted on $\vO\setminus\vO_N$ takes values in $\N_0\cup\{\infty\}$, has right-continuous paths, presents jumps of size (at most) one, vanishes at $t=0$ and increases to infinity. Without loss of generality we may and do assume, that $\vO_N=\emptyset$. Denote by $T:=\{T_n\}_{n\in\N_0}$ and $W:=\{W_n\}_{n\in\N}$  the {\bf{arrival process}} and {\bf{interarrival process}}, respectively (cf. e.g. \cite{Sc}, Section 1.1, page 6 for the definitions) associated with $N$. Note also that every arrival process induces a counting process, and vice versa (cf. e.g. \cite{Sc}, Theorem 2.1.1). \smallskip

Furthermore, let $X:=\{X_{n}\}_{n\in\mathbb N}$ be a sequence of positive  real-valued random variables on $\vO$, and for any $t\geq 0$ define
$$
S_t:=\begin{cases}  \sum^{N_t}_{k=1}X_k &\text{if}\;\; t>0; \\ 0 &\text{if}\;\; t=0. \end{cases}
$$
Accordingly, the sequence $X$ is said to be the {\bf claim size process}, and the family $S:=\{S_{t}\}_{t\in\mathbb R_{+}}$ of real-valued random variables on $\vO$ is said to be the {\bf aggregate  claims process induced by} the pair $(N,X)$.  Recall that a pair $(N,X)$ is called a {\bf risk process}, if $N$ is a counting process, $X$ is $P$--i.i.d. and the processes $N$ and $X$ are $P$--independent (see \cite{Sc}, Chapter 6, Section 6.1).\smallskip

The following definition has been introduced in \cite{lm6z3}, Definition 3.1, see also \cite{mt1}, Definition 3.2(b).
\begin{df}
	\label{mrp}
  	\normalfont
	A counting process $N$  is said to be a $P$--{\bf mixed renewal process with mixing parameter $\vT$ and  interarrival time conditional distribution $\bf{K}(\vT)$} (written $P$--MRP$(\bf{K}(\vT))$ for short), if the induced  interarrival process $W$ is $P$--conditionally independent and 
	$$
	\forall\;n\in\N\qquad  [P_{W_n\mid\vT}=\bf{K}(\vT)\quad P\upharpoonright\sigma(\vT)\text{-a.s.}].
	$$
	In particular, if the distribution of $\vT$ is degenerate at some point $\theta_0\in D$, then the counting process $N$ becomes  a $P$--{\em renewal process with  interarrival time distribution $\bf{K}(\theta_0)$} (written $P$--RP$(\bf{K}(\theta_0))$ for short).
\end{df}

Accordingly,  an aggregate claims process $S$ induced by a $P$--risk process $(N,X)$  such that $N$ is a $P$--MRP$(\bf{K}(\vT))$ is called  a {\bf compound mixed renewal process  with parameters ${\bf K}(\vT)$ and $P_{X_1}$} ($P$--CMRP$({\bf K}(\vT),P_{X_1})$ for short). In particular, if the distribution of $\vT$ is degenerate at $\theta_0\in D$ then $S$ is called a {\bf compound  renewal process with parameters ${\bf K}(\theta_0)$ and $P_{X_1}$} ($P$--CRP$({\bf K}(\theta_0),P_{X_1})$ for short).\smallskip

{\em Throughout what follows we denote again by ${\bf K}(\vT)$ and ${\bf K}(\theta)$ the conditional distribution function and the distribution function  induced by the conditional probability distribution ${\bf K}(\vT)$ and the  probability distribution ${\bf K}(\theta)$, respectively.}\smallskip

 The following conditions will be useful for our investigations:
 \begin{itemize}
 	\item[{\bf(a1)}] The pair $(W,X)$ is $P$--conditionally independent.
 	\item[{\bf(a2)}] The random vector $\vT$ and the process $X$ are $P$--(unconditionally) independent.
 \end{itemize}
 {\em Next, whenever condition (a1) and (a2) holds true we shall write that the quadruplet $(P,W,X,\vT)$ or (if no confusion arises) the probability measure $P$ satisfies (a1) and (a2), respectively}.\smallskip

 Since conditioning is involved in the definition of (compound) mixed renewal processes, it seems natural to investigate the relationship between such processes and regular conditional probabilities. To this purpose, we recall their definition.
 
 \begin{df}\label{rcp} 
 	\normalfont
 	Let $(\vY,H,R)$ be a probability probability space. A family $\{P_y\}_{y\in\vY}$ of probability measures on $\vS$ is called a {\bf regular conditional probability} (rcp for short) of $P$ over $R$  if
 	\begin{itemize}
 		\item[{\bf(d1)}] 
 		for each $E\in\vS$ the map $y\longmapsto P_y(E)$ is $H$--measurable;
 		\item[{\bf(d2)}]
 		$\int P_{y}(E)\,R(dy)=P(E)$ for each $E\in\vS$.
 	\end{itemize}
  \end{df}

We could use the term of {\em disintegration} instead, but it is better to reserve that term to the general case when $P_y$'s may be defined on different domains (see \cite{pa}).

 	If $f:\vO\longrightarrow\vY$ is an inverse-measure-preserving function (i.e. $P(f^{-1}(B))=R(B)$ for each $B\in{H}$), a rcp $\{P_{y}\}_{y\in\vY}$ of $P$ over $R$ is called {\bf consistent} with $f$ if, for each $B\in{H}$  the equality $P_{y}(f^{-1}(B))=1$ holds for $R$--almost every $y\in B$.\smallskip
 	
 	 We say that a rcp $\{P_{y}\}_{y\in\vY}$ of $P$ over $R$ consistent with $f$ is {\bf  essentially unique}, if for any other rcp $\{\wt P_{y}\}_{y\in\vY}$ of $P$ over $R$ consistent with $f$ there exists a $R$--null set $L\in H$ such that for any $y\notin{L}$ the equality $P_y=\wt P_y$ holds true.\smallskip

 {\em Remark.} 
 If $\vS$ is countably generated (cf. e.g. \cite{Co}, Section 3.4, page 102 for the definition) and $P$ is perfect (see \cite{fa}, p. 291 for the definition), then there always exists a rcp $\{P_{y}\}_{y\in\vY}$ of $P$ over $R$ consistent with any inverse-measure-preserving  map $f$ from $\vO$ into $\vY$ providing that $H$ is countably generated (see \cite{fa}, Theorems 6).
 So, in most cases appearing in applications (e.g. Polish spaces) rcps  as above always exist.\smallskip

 {\em From now on, unless stated otherwise, the family $\{P_\theta\}_{\theta\in D}$ is a rcp of $P$ over $P_\vT$ consistent with $\vT$.}\smallskip
 
Let $\T\subseteq\R_+$ with $0\in\T$. For a process $Y_\T:=\{Y_t\}_{t\in\T}$ denote by $\F_\T^Y:=\{\F^Y_t\}_{t\in\T}$ the canonical filtration of $Y_\T$. For  $\T=\R_+$ or $\T=\N_0$ we simply write $\F^Y$ instead of $\F^Y_{\R_+}$ or $\F^Y_{\N_0}$, respectively. Also, we write $\F:=\{\F_t\}_{t\in\R_+}$, where $\F_t:=\sigma(\F^S_t\cup\sigma(\vT))$, for the canonical filtration generated by $S$ and $\vT$, $\F^S_\infty:=\sigma(\bigcup_{t\in\R_+}\F^S_t)$ and $\F_\infty:=\sigma(\F^S_\infty\cup\sigma(\vT))$. \smallskip 
 
 The following characterization of compound mixed renewal processes in terms of rcps is of independent interest, as it allows us to convert a CMRP into a CRP via a suitable change of measures. It also plays a fundamental role in the characterization of progressively equivalent measures that preserve the  structure of a CMRP.
 
 \begin{prop}\label{crp} 
 	If  $P$ satisfies conditions (a1) and (a2), the following are equivalent:
 	\begin{enumerate}
 		\item $S$ is a $P$--CMRP$({\bf K}(\vT),P_{X_1})$;
 		\item there exists a $P_{\vT}$--null set $L_P\in\B(D)$ such that $S$ is a $P_\theta$--CRP$({\bf K}(\theta),P_{X_1})$ with $P_{X_1}=(P_{\theta})_{X_1}$ for every $\theta\notin L_P$.
 	\end{enumerate} 
 \end{prop}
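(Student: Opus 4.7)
The strategy is to bridge conditional quantities under $P$ (given $\vT$) with unconditional quantities under the rcp members $P_\theta$. The crucial observation is that, for every $A\in\vS$, the map $\omega\mapsto P_{\vT(\omega)}(A)$ is a version of $P(A\mid\sigma(\vT))$: $\sigma(\vT)$-measurability follows from (d1), and for $F=\vT^{-1}(B)$ with $B\in\B(D)$ consistency yields $P_\theta(A)=P_\theta(A\cap F)$ for $P_\vT$-a.e.\ $\theta\in B$ (and $P_\theta(A\cap F)=0$ for $P_\vT$-a.e.\ $\theta\notin B$), hence
\[
\int_F P_{\vT(\cdot)}(A)\,dP=\int_B P_\theta(A)\,dP_\vT(\theta)=\int_D P_\theta(A\cap F)\,dP_\vT(\theta)=P(A\cap F).
\]
Once this bridge is in place, both implications reduce to reinterpreting the defining conditional properties as pointwise (in $\theta$) identities for $P_\theta$.

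For $(1)\Rightarrow(2)$, I would evaluate each of the ingredient conditions---the $P$-conditional independence of $W$, the identity $P_{W_n\mid\vT}={\bf K}(\vT)$, condition (a1) (conditional independence of $(W,X)$), and condition (a2) (unconditional independence of $X$ and $\vT$, which in conditional form reads $P_{X_1,\ldots,X_n\mid\vT}=P_{X_1,\ldots,X_n}$)---at rational test sets. Every finite rational tuple $(q_1,\ldots,q_n)$ produces, via the bridge, a single $P_\vT$-null exceptional set on which the corresponding pointwise identity in $\theta$ fails. Taking the countable union of all these null sets gives one set $L_P\in\B(D)$. For $\theta\notin L_P$, the rational-point identities upgrade to arbitrary Borel sets by right-continuity of distribution functions together with a standard $\pi$-$\lambda$ argument, so that $W$ is $P_\theta$-i.i.d.\ with $(P_\theta)_{W_n}={\bf K}(\theta)$, $X$ is $P_\theta$-i.i.d.\ with $(P_\theta)_{X_1}=P_{X_1}$, and $(W,X)$ (hence $(N,X)$) is $P_\theta$-independent. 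This is precisely the statement that $S$ is a $P_\theta$-CRP$({\bf K}(\theta),P_{X_1})$.

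For $(2)\Rightarrow(1)$, I would read the bridge in the opposite direction. The $P_\theta$-statements for $\theta\notin L_P$ give, for all Borel $B,B_1,\ldots,B_n$, the unconditional equalities $P_\theta(\bigcap W_k^{-1}[B_k])=\prod P_\theta(W_k^{-1}[B_k])$, $P_\theta(W_n^{-1}[B])={\bf K}(\theta)(B)$, $(P_\theta)_{X_1,\ldots,X_n}=P_{X_1}^{\otimes n}$, and the analogous independence of $(W,X)$. Substituting $\theta=\vT(\omega)$ and invoking the first-paragraph identification converts each of these into the corresponding $P\uph\sigma(\vT)$-a.s.\ equality for $P(\cdot\mid\sigma(\vT))$, which is exactly the definition of a $P$-CMRP$({\bf K}(\vT),P_{X_1})$. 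The unconditional $P$-i.i.d.\ property of $X$ is then recovered by integrating $(P_\theta)_{X_1,\ldots,X_n}=P_{X_1}^{\otimes n}$ against $P_\vT$.

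The main obstacle I expect is the careful bookkeeping of the null sets: the defining properties involve four independent ingredients, each indexed by countable families of test sets, and all of the resulting $P_\vT$-null exceptional sets must be merged into a single $L_P$ that works simultaneously for all four properties and for all Borel test sets. A second subtlety is that hypotheses (a1) and (a2) play distinct and indispensable roles: (a2) is what forces $(P_\theta)_{X_1}=P_{X_1}$ (an equality constant in $\theta$) rather than some $\theta$-dependent law, while (a1) is what transfers the $P_\theta$-independence of $N$ and $X$ (which otherwise might fail even when each marginal is controlled).
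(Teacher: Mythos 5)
Your argument is correct and self-contained, but it takes a noticeably different route from the paper. The paper's proof is essentially a three-citation assembly: it invokes Lemma~2.3 of Lyberopoulos--Macheras (\cite{lm3}) to transfer the risk-process property of $(N,X)$ to $P_\theta$-risk processes for $P_\vT$-a.e.\ $\theta$ (yielding $L_{P,1}$), Proposition~3.8 of \cite{lm6z3} to transfer the $P$-MRP property of $N$ to $P_\theta$-RP for a.e.\ $\theta$ (yielding $L_{P,2}$), and Lemma~3.5 of \cite{lm1v} together with condition (a2) and a monotone class argument on the countably generated $\F_1^X$ to force $(P_\theta)_{X_1}=P_{X_1}$ a.e.\ (yielding $L_{P,3}$), then sets $L_P$ to be their union. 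What you do instead is prove the underlying ``bridge'' fact directly---that $\omega\mapsto P_{\vT(\omega)}(A)$ is a version of $P(A\mid\sigma(\vT))$, using (d1) for measurability and consistency together with (d2) for the integral identity---and then push the definitions of conditional independence, $P_{W_n\mid\vT}=\mathbf{K}(\vT)$, (a1), and (a2) through this bridge at countable rational test families, merging the resulting null sets into a single $L_P$ and upgrading to full Borel identities via right-continuity and a $\pi$--$\lambda$ argument. Your route reproves (the relevant pieces of) the three cited lemmas in one unified stroke, which makes the proposition self-contained and transparently symmetric between the two implications; the paper's route is shorter on the page and delegates the delicate null-set bookkeeping to the cited lemmas. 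You were also right to flag that the main pitfall is the bookkeeping: one must take care that the four families of exceptional sets are merged before passing to the $\pi$--$\lambda$ extension, not after, and that (a2) enters precisely to pin down the constancy in $\theta$ of $(P_\theta)_{X_1}$ rather than merely its existence. Both proofs are correct and yield the same $L_P$ up to a null set.
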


\begin{proof} Assertion (i) is equivalent to the fact that $(N,X)$ is a risk process and that $N$ is a $P$-MRP($\mathbf{K}(\vT)$). But according to \cite{lm3}, Lemma 2.3, and \cite{lm6z3}, Proposition 3.8, we equivalently obtain two $P_\vT$--null sets   $L_{P,1}, L_{P,2}\in\B(D)$ such that the pair $(N,X)$ is a $P_\theta$--risk process for any $\theta\notin L_{P,1}$, and for any $\theta\notin L_{P,2}$ the process $N$ is a $P_\theta$--RP$({\bf K}(\theta))$. 
Let us fix an arbitrary $A\in\mathcal{F}_1^X$. Since $P$ satisfies condition (a2), applying \cite{lm1v}, Lemma 3.5, we get for any $B\in\mf{B}(D)$ that
\[
\int_BP_{\theta}(A)\,P_{\vT}(d\theta)=\int_BP(A)\,P_{\vT}(d\theta),
\]
or equivalently that there exists a $P_{\vT}$--null set $L_{P,3,A}\in\mf{B}(D)$ such that for any $\theta\notin{L}_{P,3,A}$ condition $P_{\theta}(A)=P(A)$ holds. But since $\mathcal{F}_1^X$ is countably generated, by a monotone class argument, we equivalently get a $P_{\vT}$--null set $L_{P,3}\in\mf{B}(D)$ such that for any $\theta\notin{L}_{P,3}$ condition $(P_{\theta})_{X_1}=P_{X_1}$ holds. Thus, putting $L_P:=L_{P,1}\cup L_{P,2}\cup L_{P,3}$, we equivalently get that $S$ is a $P_\theta$--CRP$({\bf K}(\theta),P_{X_1})$ for any $\theta\notin L_P$, completing in this way the proof.
\end{proof}   

   \begin{rems}\label{dis} 
 	\normalfont
 	{\bf (a)} An immediate consequence of Proposition \ref{crp}  is that, if $N$ is a $P$--MRP$({\bf K}(\vT))$ then the event of {\bf explosion} $E:=\{\sup_{n\in\N_0} T_n<\infty\}$ is a $P$--null set.\smallskip
 	
 	In fact, if  $N$ is a $P$--MRP$({\bf K}(\vT))$, then by Proposition \ref{crp} we have that $N$ is a $P_\theta$--RP$({\bf K}(\theta))$ for any $\theta\notin L_{P,2}$. But since  any renewal process has finite expectations (cf. e.g. \cite{se}, Proposition 4, page 101) and  any counting process with finite expectations has zero probability of explosion (cf. e.g. \cite{Sc}, Corollary 2.1.5), we obtain that $P_\theta(E)=0$ for any $\theta\notin L_{P,2}$; hence condition (d2) implies that
  \[
 	P(E)=\int_D P_\theta(E)\,P_\vT(d\theta)=0.
 	\] 
{\bf (b)} Let $S$ be the aggregate claims process induced by the counting process $N$ and the claim size process $X$. Fix on arbitrary $u\in\vY$ and $t\in\R_+$, and define the function $r_t^u:\vO\times{D}\longrightarrow\R$ by means of  
\begin{equation}\label{res1}
r_t^u(\omega,\theta):=u+c(\theta)\cdot{t}-S_t(\omega)\quad\mbox{for any}\quad (\omega,\theta)\in\vO\times{D},
\end{equation}
where $c$ is a positive $\B(D)$--measurable function. For arbitrary but fixed $\theta\in{D}$, the process $r^u(\theta):=\{r_t^u(\theta)\}_{t\in\R_+}$ defined by  $r_t^u(\theta):=r_t^u(\omega,\theta)$ for any $\omega\in\vO$, is called the {\bf reserve process} induced by the {\bf initial reserve} $u$, the {\bf premium intensity} or {\bf premium rate} $c(\theta)$ and the aggregate claims process $S$ (see \cite{Sc}, Section 7.1, pages 155-156 for the definition). The function $\psi_{\theta}$ defined by $\psi_{\theta}(u):=P_{\theta}(\{\inf{r}_t^u(\theta)<0\})$  is called the {\bf probability of ruin} for the reserve process $r^u(\theta)$ with respect to $P_\theta$ (see \cite{Sc}, Section 7.1, page 158 for the definition).

Define the real-valued functions $r_t^u(\vT)$ and $R_t^u$ on $\vO$ by means of  $
r_t^u(\vT)(\omega):=r_t^u(\omega,\vT(\omega))$ for any $\omega\in\vO$,  and $R_t^u:=r_t^u\circ(id_{\vO}\times\vT)$, respectively. The process $R^u:=\{R_t^u\}_{t\in\R_+}$ is called the {\bf reserve process} induced by the initial reserve $u$, the {\bf stochastic premium intensity} or {\bf stochastic premium rate} $c(\vT)$ and the aggregate claims process $S$. The function $\psi$ defined by 
 $\psi(u):=P(\{\inf{R}_t^u<0\})$ is called the {\bf probability of ruin} for the reserve process $R^u$ with respect to $P$.   	
   	
We claim that
\[
\psi(u)=\int_D\psi_{\theta}(u)P_{\vT}(d\theta)\quad\text{for any}\,\,u\in\vY.
\] 

 In fact, since $R_t^u=r_t^u(\vT)$ by definition, we have $\{\inf{R}_t^u<0\}=\{\inf{r}_t^u(\vT)<0\}$; hence applying \cite{lm1v}, Proposition 3.8 we get
\begin{align*}
\psi(u) &= P(\{\inf{r}_t^u(\vT)<0\})=\int_{\vO}\E_P[\chi_{(-\infty,0)}\circ\inf{r}_t^u(\vT)\mid\vT]dP\\
&= \int_{\vO}\E_{P_{\bullet}}[\chi_{(-\infty,0)}\circ\inf{r}_t^u(\bullet)]\circ\vT]dP=\int_D\E_{P_{\theta}}[\chi_{(-\infty,0)}\circ\inf{r}_t^u(\theta)]\circ{P}_{\vT}(d\theta)\\
&= \int_D\psi_{\theta}(u)P_{\vT}(d\theta).
\end{align*}
 \end{rems}

 \begin{ex}
 	\normalfont
 	\label{rup} 
 	Assume that $P$ satisfies condition (a1) and (a2), and let  $S$ be a $P$--CMRP$({\bf K}(\vT),P_{X_1})$  with  $P_{X_1}={\bf Exp}(\eta)$ and $\eta\in\vY$. Applying Proposition \ref{crp} we obtain a $P_\vT$--null set $L_P\in\B(D)$ such that $S$ is a $P_\theta$--CRP$({\bf K}(\theta),P_{X_1})$  for any $\theta\notin L_P$; hence we may apply \cite{rss}, Corollary 6.5.2, for $\delta=\eta$ and $\ga=R(\theta)$ in order to obtain 
 		\[
 		\psi_\theta(u)=\left(1-\frac{R(\theta)}{\eta}\right)\cdot e^{-R(\theta)\cdot u}\quad\text{for any}\,\,u\geq 0,
 		\]
 		where $R(\theta)$ is the unique positive solution of 
 		\[
 		\E_P[e^{r\cdot X}]\cdot \E_{P_\theta}[e^{-c\cdot r\cdot W_1}]=1.
 		\]
 		 Consequently, according to Remark \ref{dis}(b), the probability of ruin for a compound mixed renewal process  with    exponential claim size distributions is given by
 		\[
 		\psi(u)=\int_D \psi_\theta(u)\,P_\vT(d\theta)=\int_D \left(1-\frac{R(\theta)}{\eta}\right)\cdot e^{-R(\theta)\cdot u}\,P_\vT(d\theta)\quad\text{for any}\,\,u\geq 0.
 		\]
 \end{ex}

 In order to prove the main result of this section we need the following auxiliary lemmas.
 \begin{lem}
\label{lem1} 
Let $Q$ be a probability measure on $\vS$ such that $Q_\vT\sim P_\vT$, and let $\{Q_{\theta}\}_{\theta\in D}$ be a rcp of $Q$ over $Q_\vT$ consistent with $\vT$. The following hold true:
\begin{enumerate}
\item if $Q_{W_1}\sim P_{W_1}$, then there exists a $P_\vT$-null set $M\in\B(D)$ such that $(Q_\theta)_{W_1} \sim (P_\theta)_{W_1} $ for any $\theta\notin M$;
\item 	if $W$ is $P$-- and $Q$--conditionally i.i.d., then there exists a $P_\vT$--null set $\wt M\in\B(D)$, containing the $P_\vT$--null set $M$, and for any $\theta\notin \wt M$  there exists a $(P_\theta)_{W_1}$--a.s. positive Radon-Nikod\'{y}m derivative $r_{\theta}$ of $(Q_\theta)_{W_1}$ with respect to $(P_\theta)_{W_1}$, satisfying  for all $n\in\N_0$  condition
\begin{equation}
Q_\theta(E)=\E_{P_\theta}[\chi_E\cdot\prod_{j=1}^{n}\, r_{\theta}(W_j)]\quad\text{ for any }\,\, E\in\F^W_n.
\label{21}
\end{equation}
\end{enumerate} 
\end{lem}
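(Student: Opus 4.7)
My plan is to prove (i) by identifying the conditional density via a joint Radon--Nikod\'ym derivative on $\sigma(\vT, W_1)$, and then to obtain (ii) by lifting this one-coordinate density to the $n$-coordinate setting via the conditional i.i.d.\ structure of $W$.

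For part (i), I would begin with the two disintegrations
\[
P_{W_1}(B)=\int_D (P_\theta)_{W_1}(B)\,P_\vT(d\theta),\qquad Q_{W_1}(B)=\int_D (Q_\theta)_{W_1}(B)\,Q_\vT(d\theta),
\]
for $B \in \B(\vY)$, and introduce the $P_\vT$-a.s.\ positive density $g := dQ_\vT/dP_\vT$ together with $f := dQ_{W_1}/dP_{W_1}$, which is $P_{W_1}$-a.s.\ positive. Invoking the standing progressive-equivalence hypothesis to secure $Q \sim P$ on $\sigma(\vT,W_1)$, let $Z := dQ/dP$ there; by Doob--Dynkin, $Z = Z(\vT, W_1)$, and $Z > 0$ $P$-a.s. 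Computing $Q(\vT\in A, W_1\in B)$ in two ways (via the rcp $\{Q_\theta\}$ consistent with $\vT$, and via integration against $Z$) yields
\[
\int_A (Q_\theta)_{W_1}(B)\,g(\theta)\,P_\vT(d\theta)=\int_A\int_B Z(\theta,w)\,(P_\theta)_{W_1}(dw)\,P_\vT(d\theta)
\]
for every $A \in \B(D)$ and $B \in \B(\vY)$. Fixing a countable $\pi$-system $\mathcal A \subseteq \B(\vY)$ generating $\B(\vY)$, for each $B \in \mathcal A$ one extracts a $P_\vT$-null exceptional set outside of which the integrands in $\theta$ coincide; the countable union $M$ of these sets is still $P_\vT$-null. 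A monotone class argument then extends the identity to all $B \in \B(\vY)$ for $\theta \notin M$, exhibiting $r_\theta(\cdot) := Z(\theta,\cdot)/g(\theta)$ as the density of $(Q_\theta)_{W_1}$ with respect to $(P_\theta)_{W_1}$. Absorbing into $M$ the further $P_\vT$-null set on which $\{Z(\theta,\cdot) = 0\}$ fails to be $(P_\theta)_{W_1}$-negligible (which is $P_\vT$-null by disintegrating the $P$-null set $\{Z=0\}$) forces $r_\theta > 0$ a.s., giving $(Q_\theta)_{W_1} \sim (P_\theta)_{W_1}$.

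For part (ii), let $r_\theta$ denote the density obtained in (i). The $P$- and $Q$-conditional i.i.d.\ structure of $W$ transfers to the rcps by the argument of Proposition \ref{crp} (see also \cite{lm3}, Lemma 2.3), producing a $P_\vT$-null set $\widetilde M \supseteq M$ outside of which $W$ is outright i.i.d.\ under both $P_\theta$ and $Q_\theta$ with respective one-dimensional marginals $(P_\theta)_{W_1}$ and $(Q_\theta)_{W_1}$. Consequently, the joint law of $(W_1,\ldots,W_n)$ under $Q_\theta$ is the product $((Q_\theta)_{W_1})^{\otimes n}$ (and analogously under $P_\theta$), and the Radon--Nikod\'ym density of one product with respect to the other is the product of the coordinate densities, namely $\prod_{j=1}^n r_\theta(w_j)$. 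This gives \eqref{21} on the $\pi$-system of cylinders $\{W_1 \in B_1,\ldots,W_n \in B_n\}$ generating $\mathcal F_n^W = \sigma(W_1,\ldots,W_n)$, and a monotone class argument extends it to all of $\mathcal F_n^W$. The case $n = 0$ is immediate, as $\mathcal F_0^W = \{\emptyset, \vO\}$ and the empty product equals $1$.

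The main obstacle is part (i), specifically converting a relation that holds only after integration against $P_\vT$ into a pointwise relation on a single $P_\vT$-null set. This is what forces one to bring in a joint Radon--Nikod\'ym derivative on $\sigma(\vT, W_1)$ (supplied by progressive equivalence rather than by the marginal equivalences alone) and then to use the countable generation of $\B(\vY)$ together with a monotone class extension to unify the $B$-dependent exceptional sets into a single $P_\vT$-null set $M$.
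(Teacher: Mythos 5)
Your argument for (i) rests on a hypothesis that the lemma does not grant you. You invoke a ``standing progressive-equivalence hypothesis'' to obtain $Q\sim P$ on $\sigma(\vT,W_1)$, and everything built afterwards --- the joint density $Z=dQ/dP$, the identification $r_\theta(\cdot)=Z(\theta,\cdot)/g(\theta)$, the a.s.\ positivity deduced from $Z>0$ $P$-a.s.\ --- depends essentially on that joint equivalence. But no such hypothesis is in force in Lemma~\ref{lem1}: the only assumptions are that $\{Q_\theta\}_{\theta\in D}$ is a rcp of $Q$ over $Q_\vT$ consistent with $\vT$, that $Q_\vT\sim P_\vT$, and, for part (i), that $Q_{W_1}\sim P_{W_1}$. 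These marginal equivalences do not imply $Q\sim P$ on $\sigma(\vT,W_1)$: take $P$ concentrated on $\{(1,1),(2,2)\}\subset\vY\times D$ and $Q$ concentrated on $\{(2,1),(1,2)\}$, each with mass $1/2$; the $\vT$- and $W_1$-margins of $P$ and $Q$ coincide, yet $P$ and $Q$ are mutually singular on $\sigma(\vT,W_1)$, and $(P_1)_{W_1}=\delta_1$ is not equivalent to $(Q_1)_{W_1}=\delta_2$ on a set of $\theta$ of positive $P_\vT$-mass. Building on $Z$ therefore proves a statement with strictly stronger hypotheses than those written in the lemma.

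The paper's proof of (i) does not pass through any joint Radon--Nikod\'ym derivative. It manipulates the disintegration identities $Q_{W_1}(B)=\int(Q_\theta)_{W_1}(B)\,Q_\vT(d\theta)$ and the analogue for $P$ directly: for each $B$ in a countable generating $\pi$-system of $\B(\vY)$, the equivalence $Q_{W_1}\sim P_{W_1}$ together with property (d2) of the rcps is used to carry nullity of $B$ back and forth between the mixture marginals $Q_{W_1},P_{W_1}$ and the sectional marginals $(Q_\theta)_{W_1},(P_\theta)_{W_1}$ outside $B$-dependent $P_\vT$-null sets, and these are then unioned over the countable generator into a single exceptional set $M$. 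Your plan for part (ii), by contrast, tracks the paper's closely: conditional i.i.d.\ of $W$ under $P$ and $Q$ is transferred to a.e.\ section $P_\theta,Q_\theta$ (the paper cites a sectional characterization of conditional i.i.d.\ for mixed renewal processes rather than re-deriving it through Proposition~\ref{crp}), and condition~\eqref{21} is checked on the cylinder $\pi$-system generating $\F^W_n$ and extended by a Dynkin-class argument. To establish (i) under the lemma's stated hypotheses you would need to replace the joint-density step with a direct null-set argument of the paper's kind, rather than manufacture a density $Z$ that those hypotheses do not provide.
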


\begin{proof}  Ad (i): Fix on arbitrary $B\in\B(\vY)$, and consider a $Q_{\vT}$--null set $M_{Q,B}\in\B(D)$ such that $(Q_{\theta})_{W_1}(B)=0$ for any $\theta\notin{M}_{Q,B}$. We then get
\[
Q_{W_1}(B)=\int (Q_\theta)_{W_1}(B)\,Q_\vT(d\theta)=\int_{M^c_{Q,B}}(Q_\theta)_{W_1}(B)\,Q_\vT(d\theta),
\]
implying $Q_{W_1}(B)=0$, or equivalently $P_{W_1}(B)=0$ by $Q_{W_1}\sim{P}_{W_1}$. It then follows by the property (d2) of $\{P_{\theta}\}_{\theta\in{D}}$ that there exists a $P_{\vT}$--null set $M_{P,B}\in\mf{B}(D)$ such that $(P_{\theta})_{W_1}(B)=0$ for any $\theta\notin{M}_{P,B}$. Replacing $Q$ with $P$, and assuming that there exists a $P_\vT$--null set $N_{P,B}\in\B(D)$ such that
$(P_\theta)_{W_1}(B)=0$ for any $\theta\notin N_{P,B}$
we conclude that there exists a $Q_\vT$--null set  $N_{Q,B}\in\B(D)$ such that $(Q_\theta)_{W_1}(B)=0$ for any $\theta\notin N_{Q,B}$. Put $M_B:=M_{Q,B}\cup M_{P,B}\cup N_{Q,B}\cup N_{P,B}\in\B(D)$. Clearly $M_B$ is a $P_{\vT}$--null set by $P_{\vT}\sim{Q}_{\vT}$. But since $\B(\vY)$  is countably generated, by a monotone class argument we can find a $P$--null set $M:=\bigcup_{B\in\G_{\B(\vY)}}M_B\in\sigma(\vT)$, where $\G_{\B(\vY)}$ is a countable generator of $\B(\vY)$ which is closed under finite intersections,  such that $(Q_\theta)_{W_1} \sim (P_\theta)_{W_1} $ for any $\theta\notin M$; hence (i) follows.

Ad (ii): Since $W$ is $P$-- and $Q$--conditionally i.i.d. and $Q_\vT\sim P_\vT$, it follows by \cite{lm6z3}, Lemma 3.7, that there exists a $P_\vT$--null set $M_1\in\B(D)$ such that $W$ is $P_\theta$-- and $Q_\theta$--i.i.d. for any $\theta\notin M_1$. Put $\wt M:= M_1\cup M$ and fix on arbitrary $n\in\N$. Assertion (i) implies that for any $\theta\notin \wt M$ there exists a $(P_\theta)_{W_1}$--a.s. positive Radon-Nikod\'{y}m derivative  $r_\theta$ of $(Q_{\theta})_{W_1}$ with respect to $(P_{\theta})_{W_1}$ such that 
\[
(Q_\theta)(W_n^{-1}[B])=\E_{P_\theta} \left[{\chi_{W_n^{-1}[B]}}\cdot r_{\theta}(W_n)\right] \quad \text{for any}\; B\in\mathfrak B(\vY).
\]

Putting $\wt{\C}^W_n:=\left\{\bigcap^{n}_{j=1} C_j : C_{j}\in\sigma(W_{j})\right\}$  we have that $\wt{\C}_n^W$ is a generator of $\mathcal{F}_n^W$, closed under finite intersections, and that any $C\in\wt{\C}_n^W$ satisfies condition  \eqref{21} by the  $P_\theta$--independence of $W$ for any $\theta\notin \wt M$. If by $\wt{\mathcal{D}}_n^W$ is denoted the family of all $E\in\mathcal{F}_n^W$ satisfying condition \eqref{21}, it can be easily shown that it is a Dynkin class containing $\wt{\C}_n^W$; hence by Dynkin Lemma we obtain $\wt{\D}_n^W=\F_n^W$, i.e. condition \eqref{21} holds.
\end{proof}

\begin{symbs}\label{symb2} 
\normalfont 
{\bf(a)}  Let $h$ be a real-valued, one to one $\B(\vY)$--measurable function. The class of all real-valued $\B(\vY)$--measurable functions $\ga$  such that $\E_{P}\left[h^{-1}\circ\ga\circ X_{1}\right]=1$ will be denoted by $\F_{P,h}:=\F_{P,X_1,h}$. The class of all real-valued  $\B(D)$--measurable functions $\xi$ on $D$ such that $P_{\vT}(\{\xi>0\})=1$  and $\E_P[\xi(\vT)]=1$ is denoted by $\mathcal{R}_+(D):=\mathcal{R}_+(D,\mathfrak{B}(D), P_{\vT})$.

{\bf (b)} Denote by $\mathfrak{M}^k(D)$ ($k\in\N$)  the class of all $\B(D)$--$\B(\R^k)$--measurable functions on $D$. In the special case $k=1$ we write $\mathfrak M(D):=\mathfrak M^1(D)$. By  $\mathfrak M_+(D)$ will be denoted the class of all positive $\B(D)$--measurable functions on $D$.  Two probability measures $P$ and $Q$ on $\vS$ are called {\bf progressively equivalent}, if they are equivalent (in the sense of absolute continuity) on each $\F_t$ (in symbols, $Q\uph\F_t\sim P\uph\F_t$ for any $t\geq 0$).  For each $\rho\in\mf{M}^k(D)$ the class of all probability measures $Q$ on $\vS$ which satisfy conditions (a1)  and  (a2),   are progressively equivalent to $P$, and such that $S$ is a $Q$--CMRP$({\bf \Lambda}(\rho(\vT)),Q_{X_1})$, is denoted by $\M_{S,{\bf\Lambda}(\rho(\vT))}:=\M_{S,{\bf \Lambda}(\rho(\vT)),P,{X_1}}$. In the special case $d=k$ and $\rho:=id_D$ we write $\M_{S,{\bf \Lambda}(\vT)}:=\M_{S,{\bf \Lambda}(\Ttheta)}$  for simplicity. 

{\bf (c)} 
For  given $\rho\in\mathfrak M^k(D)$ and  $\theta\in D$,  denote by ${\M}_{S,{\bf\Lambda}(\ttheta)}$  the class of all probability measures $Q_\theta$ on $\vS$, such that $Q_\theta\uph\F_t\sim P_\theta\uph\F_t$ for any $t\in\R_+$ and $S$ is a $Q_\theta$--CRP$({\bf \Lambda}(\ttheta),(Q_\theta)_{X_1})$.  
\end{symbs}

 {\em From now on, unless stated otherwise, $h$ is as in Notations \ref{symb2}(a), and $P\in\M_{S,{\bf K}(\vT)}$ is the initial probability measure under which $S$ is a $P$--CMRP$({\bf K}(\vT),P_{X_1})$.}  
 \smallskip

Recall that a  {\bf martingale in $\mathcal{L}^1(P)$ adapted to the filtration $\mathcal{Y}_{\mathbb{T}}$} or a $\mathcal{Y}_{\mathbb{T}}${\bf--martingale} in $\mathcal{L}^1(P)$ is a family $Y_\T:=\{Y_t\}_{t\in\T}$  of real--valued random variables in $\mathcal{L}^1(P)$ such that each $Y_t$ is  $\mathcal{Y}_t$--measurable and whenever $s\leq t$ in $\mathbb{T}$ condition $\int_A Y_s\,dP=\int_A Y_t\,dP$ holds  true for all $A\in\mathcal{Y}_s$.  A $\mathcal{Y}_\mathbb{T}$-martingale $\{Y_t\}_{t\in\T}$ in $\mathcal{L}^1(P)$ is  {\bf a.s. positive}  if $Y_t$ is $P$--a.s. positive for each $t\in\T$.  For $\mathcal{Y}_{\R_+}=\F$ we simply say that $Y:=Y_{\R_+}$ is a martingale in $\mathcal{L}^1(P)$.

\begin{lem}  
\label{lem2} 
For given $\rho\in\mathfrak{M}^k(D)$, let $Q\in\M_{S,{\bf\Lambda}(\rho(\vT))}$ and $\{Q_{\theta}\}_{\theta\in D}$ be a rcp of $Q$ over $Q_\vT$ consistent with $\vT$. Then there exist  a  $P_\vT$--null set $L_{\ast}\in\B(D)$ and a $P_{X_1}$--a.s. unique  function $\ga\in\F_{P,h}$ such that for any $\theta\notin L_{\ast}$
\begin{equation}
Q_\theta(A)=\E_{P_\theta}[\chi_A \cdot \wt M_t^{(\ga)}(\theta)]\quad\text{for all}\,\,\,0\leq u\leq t\,\,\text{and}\,\, A\in\F^S_{u},
\label{2}
\end{equation}
where 
\[
\wt M_t^{(\ga)}(\theta):=\left[\prod_{j=1}^{N_t}\,(h^{-1}\circ\ga\circ X_j)\cdot \frac{d\bf{\Lambda}(\ttheta)}{d\bf{K}(\theta)}(W_j)\right]\cdot\frac{1-{\bf{\Lambda}}(\ttheta)(t-T_{N_t})}{1-{\bf{K}}(\theta)(t-T_{N_t})},
 \]
and the family $\wt M^{(\ga)}(\theta):=\{\wt M^{(\ga)}_{t}(\theta)\}_{t\in\R_+}$ is an a.s. positive $\F^S$--martingale in $\mathcal{L}^1(P_\theta)$, satisfying condition $\E_{P_\theta}[\wt M^{(\ga)}_{t}(\theta)]=1$ for any $t\in\R_+$
\end{lem}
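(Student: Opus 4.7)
The plan is to reduce the mixed problem to the non-mixed one by disintegrating both $P$ and $Q$ over $\vT$, and then to invoke the corresponding characterization for compound renewal processes (\cite{mt3}, Theorem~3.1) pointwise in $\theta$. First I apply Proposition~\ref{crp}: since $P\in\M_{S,{\bf K}(\vT)}$ and $Q\in\M_{S,{\bf \Lambda}(\rho(\vT))}$, and since $Q_\vT\sim P_\vT$ (which follows from progressive equivalence, as $\sigma(\vT)\subseteq\F_t$), there is a $P_\vT$-null set containing $L_P\cup L_Q$ outside which $S$ is simultaneously a $P_\theta$-CRP$({\bf K}(\theta),P_{X_1})$ and a $Q_\theta$-CRP$({\bf \Lambda}(\rho(\theta)),Q_{X_1})$, with $(P_\theta)_{X_1}=P_{X_1}$ and $(Q_\theta)_{X_1}=Q_{X_1}$. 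Mirroring the argument of Lemma~\ref{lem1}(i), a disintegration argument transfers progressive equivalence to the conditional level: outside a further $P_\vT$-null set $L_{\mathrm{eq}}$, one has $Q_\theta\uph\F_t\sim P_\theta\uph\F_t$ for every rational $t\geq 0$, and hence for every $t\geq 0$ by right-continuity of $\F$.

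Setting $L_\ast:=L_P\cup L_Q\cup L_{\mathrm{eq}}$, for each $\theta\notin L_\ast$ the pair $(P_\theta,Q_\theta)$ falls within the scope of the non-mixed characterization, which supplies a function $\gamma_\theta\in\F_{P_\theta,h}$ for which \eqref{2} holds with $\gamma_\theta$ in place of $\gamma$ and $\wt M^{(\gamma_\theta)}(\theta)$ is an a.s.\ positive, unit-mean $\F^S$-martingale in $\mathcal{L}^1(P_\theta)$. The main obstacle is to produce a \emph{single} $\gamma\in\F_{P,h}$ that works for $P_\vT$-almost every $\theta$. Here condition (a2) is decisive: by the monotone class argument used in the proof of Proposition~\ref{crp}, $(P_\theta)_{X_1}=P_{X_1}$ and $(Q_\theta)_{X_1}=Q_{X_1}$ hold $P_\vT$-a.s., so $d(Q_\theta)_{X_1}/d(P_\theta)_{X_1}$ admits the common version $dQ_{X_1}/dP_{X_1}$, independent of $\theta$. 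Since $h^{-1}\circ\gamma_\theta$ is precisely this derivative in the non-mixed case, I take $\gamma:=h\circ(dQ_{X_1}/dP_{X_1})$, enlarging $L_\ast$ by a $P_\vT$-null set if necessary. The normalization $\E_P[h^{-1}\circ\gamma\circ X_1]=\E_P[dQ_{X_1}/dP_{X_1}(X_1)]=1$ places $\gamma$ in $\F_{P,h}$.

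Finally, the $P_{X_1}$-a.s.\ uniqueness of $\gamma$ follows from the uniqueness (up to $P_{X_1}$-null sets) of the Radon--Nikod\'ym derivative together with the injectivity of $h$, while the $\F^S$-martingale property and the unit-mean condition for $\wt M^{(\gamma)}(\theta)$ under $P_\theta$ are inherited from the non-mixed statement. This bypasses the backward Markovisation technique, in line with the approach announced in the introduction.
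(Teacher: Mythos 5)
Your overall strategy — disintegrate $P$ and $Q$ over $\vT$ via Proposition~\ref{crp}, invoke the non-mixed renewal characterization from \cite{mt3} pointwise in $\theta$, and then glue the fiberwise derivatives into a single $\gamma$ using condition~(a2) and the identities $(P_\theta)_{X_1}=P_{X_1}$, $(Q_\theta)_{X_1}=Q_{X_1}$ — is the right one and is essentially the paper's. The definition $\gamma:=h\circ(dQ_{X_1}/dP_{X_1})$ and its $P_{X_1}$-a.s.\ uniqueness are handled correctly.

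The gap is in the sentence where you claim, \emph{``Mirroring the argument of Lemma~\ref{lem1}(i), a disintegration argument transfers progressive equivalence to the conditional level: outside a further $P_\vT$-null set $L_{\mathrm{eq}}$, one has $Q_\theta\uph\F_t\sim P_\theta\uph\F_t$ \dots''} Lemma~\ref{lem1}(i) works because $\B(\vY)$ is countably generated: one gets a $\theta$-exceptional set $M_B$ for each $B$ in a countable generator and takes their union. The $\sigma$-algebra $\F_t=\sigma(\F^S_t\cup\sigma(\vT))$ is \emph{not} obviously countably generated (it is generated by uncountably many random variables $S_s$, $s\le t$), so this argument does not transfer as stated. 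The paper itself addresses precisely this obstruction later — in the implication (iv)~$\Longrightarrow$~(iii) of Proposition~\ref{thm1} — by decomposing over the events $\{N_u=n\}$ and using Claim~\ref{clm1} that $\F_u\cap\{N_u=n\}$ is countably generated, a trick your argument would need to import. (The appeal to ``right-continuity of $\F$'' is also unjustified — this filtration need not be right-continuous — though in fact it is unnecessary: equivalence on $\F_q$ for a single rational $q>t$ already gives equivalence on $\F_t$.)

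The paper sidesteps this entirely by never transferring progressive equivalence to the fibers. It extracts only the \emph{marginal} equivalences $Q_{W_1}\sim P_{W_1}$ and $Q_{X_1}\sim P_{X_1}$ from $Q\uph\F_t\sim P\uph\F_t$ (using \cite{dh} Lemma~2.1 and arguments from \cite{mt3}), transfers those to the conditional level — Lemma~\ref{lem1}(i) applies because $\B(\vY)$ \emph{is} countably generated — and then applies the cheaper implication \cite{mt3} Proposition~2.1 rather than the full Theorem~3.1 you invoke. That version of the non-mixed input needs only the conditional marginal equivalences, not fiberwise progressive equivalence, which is why the paper can avoid the difficult step. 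To repair your argument, either replace \cite{mt3} Theorem~3.1 by its one-sided Proposition~2.1 and supply the marginal equivalences via Lemma~\ref{lem1}, or supply the countable-generation argument for $\F_t$ via the $\{N_u=n\}$ partition trick.
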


\begin{proof} We split the proof into the following steps.\medskip

 {\bf (a)} Conditions $Q_{\vT}\sim P_{\vT}$, $Q_{W_1}\sim P_{W_1}$ and $Q_{X_1}\sim P_{X_1}$ hold true.\smallskip

In fact, let us fix on arbitrary $t\in\R_+$. Since $P\uph\mathcal{F}_t\sim{Q}\uph\mathcal{F}_t$, we have $Q_{\vT}\sim{P}_{\vT}$ by $\sigma(\vT)\subseteq\mathcal{F}_t$, while by \cite{dh}, Lemma 2.1, we have $Q_{X_1}\uph\mathcal{F}_t^S\sim{P}_{X_1}\uph\mathcal{F}_t^S$ implying $Q_{X_1}\sim{P}_{X_1}$ by $\mathcal{F}_t^S\subseteq\mathcal{F}_t$. Finally, applying similar  arguments to those of the proof of Proposition 2.1 form \cite{mt3}, we get $Q_{W_n}\uph\mathcal{F}_t^S\sim{P}_{W_n}\uph\mathcal{F}_t^S$, implying $Q_{W_n}\sim{P}_{W_n}$ for any $n\in\N$ by $\mathcal{F}_t^S\subseteq\mathcal{F}_t$. The latter together with Remark \ref{eqd} yields $Q_{W_1}\sim P_{W_1}$. 
\medskip

{\bf (b)} There exists  a $P_\vT$--null set $\wt L:=L_P\cup{L}_Q\in\B(D)$, such that for any $\theta\notin \wt L$ the aggregate process $S$ is a $P_\theta$--CRP(${\bf K}(\theta),P_{X_1}$) with $P_{X_1}=(P_{\theta})_{X_1}$, and a $Q_\theta$--CRP(${\bf \Lambda}(\rho(\theta)),Q_{X_1})$ with $Q_{X_1}=(Q_\theta)_{X_1}$.\smallskip

In fact, due to Proposition \ref{crp} there exist a $P_{\vT}$--null set $L_P\in\mf{B}(D)$ and a $Q_{\vT}$--null set $L_Q\in\mf{B}(D)$ such that the process 
$S$ is a $P_{\theta}$--CRP($\mathbf{K}(\theta),P_{X_1}$) and a $Q_{\theta}$--CRP($\mathbf\Lambda(\rho(\theta)),Q_{X_1}$) with
$P_{X_1}=(P_{\theta})_{X_1}$ and $Q_{X_1}=(Q_{\theta})_{X_1}$ for any $\theta\notin{L}_P$ and any $\theta\notin{L}_Q$, respectively. 
Thus, taking into account that $P_{\vT}\sim{Q}_{\vT}$ by (a), we obtain that $\wt{L}\in\mf{B}(D)$ is a $P_{\vT}$--null set  such that for any $\theta\notin\wt{L}$ the conclusion of (b) holds.\medskip

{\bf (c)} There exists  a $P_\vT$--null set $\wt M\in\B(D)$, and for any $\theta\notin \wt M$  there exists a $(P_\theta)_{W_1}$--a.s. positive Radon-Nikod\'{y}m derivative $r_{\theta}$ of $(Q_\theta)_{W_1}$ with respect to $(P_\theta)_{W_1}$ satisfying condition \eqref{21}.\smallskip

In fact, since $P_{\vT}\sim{Q}_{\vT}$ by (a), and $W$ is $P$-- and $Q$--conditionally independent by assumption, we can apply Lemma \ref{lem1} in order to conclude the validity of (c).\medskip

{\bf (d)} There exists   a $P_{X_1}$--a.s. unique  function $\ga\in\F_{P,h}$, defined by means of $\ga:=h\circ f$, where $f$ is a  $P_{X_1}$--a.s. positive Radon-Nikod\'{y}m derivative  of $Q_{X_1}$ with respect to $P_{X_1}$, satisfying for every $n\in\N_0$ condition 
\begin{equation}
Q(E)=\E_P[\chi_E\cdot \prod_{j=1}^n (h^{-1}\circ \ga\circ X_j)]\quad\text{ for any }\,\, E\in\F^X_n.
\label{d1}
\end{equation}
In fact, since $P_{X_1}\sim{Q}_{X_1}$ by (a), we can apply \cite{mt3}, Lemma 2.2(a), in order to get (d). \medskip

{\bf (e)} There exists a  $P_\vT$--null set $L_{\ast}\in\B(D)$  such that for any $\theta\notin L_{\ast}$ the conclusion of the lemma holds true.

 In fact, put $L_\ast:=\wt L\cup \wt M$ and fix on an arbitrary $\theta\notin L_\ast$. Since $(Q_\theta)_{W_1}\sim (P_\theta)_{W_1}$ by (c), and $(Q_{\theta})_{X_1}\sim ({P}_{\theta})_{X_1}$ by (a) and (b), we can apply \cite{mt3}, Proposition 2.1, to complete the whole proof. 
\end{proof}

For a given aggregate claims process $S$ on $(\vO,\vS)$, in order to characterize the  progressively equivalent  measures that preserve the structure of a compound mixed renewal process (see  Theorem \ref{thm!}), one has to be able to characterize the Radon-Nikod\'{y}m derivatives $dQ/dP$. The next result provides such a characterization, as well as the one direction of our main result.

\begin{prop}\label{thm1} 
For given $\rho\in\mf M^k(D)$, let $Q$ be a probability measure on $\vS$ satisfying 
conditions (a1), (a2) and such that  $S$ is a $Q$--CMRP$({\bf\Lambda}(\rho(\vT)),Q_{X_1})$. If $\{Q_{\theta}\}_{\theta\in D}$ is a rcp of $Q$ over $Q_\vT$ consistent with $\vT$, then the following are all equivalent:
\begin{enumerate}
\item
$Q\uph\F_t\sim P\uph\F_t$ for any $t\in\R_+$;
\item
$Q_{X_{1}}\sim P_{X_{1}}$,  $Q_{W_{1}}\sim P_{W_{1}}$ and $Q_\vT\sim P_\vT$;
\item
$Q_\vT\sim{P}_\vT$ and there exist a $P_\vT$--null set $ L_{\ast\ast}\in\B(D)$  and a $P_{X_1}$--a.s. unique function $\ga\in\F_{P,h}$ such that for any $\theta\notin  L_{\ast\ast}$   
\begin{equation}
\tag{$RRM_\theta$}
Q_\theta(A)=\int_{A} \wt M^{(\ga)}_{t}(\theta)\,dP_\theta\quad\text{for all}\,\,\,0\leq u\leq t\,\,\text{and}\,\, A\in\F_{u}
\label{rcp2}
\end{equation} 
  and the family $\wt M^{(\ga)}(\theta):=\{\wt M^{(\ga)}_{t}(\theta)\}_{t\in\R_+}$, involved in condition \eqref{2},  is a $P_\theta$--a.s. positive martingale in $\mathcal{L}^1(P_\theta)$  satisfying condition $\E_{P_\theta}[\wt M^{(\ga)}_{t}(\theta)]=1$;
\item
there exist an essentially unique pair $(\ga,\xi)\in\F_{P,h}\times\mathcal{R}_+(D)$, where $\xi$ is a Radon-Nikod\'{y}m derivative  of $Q_\vT$ with respect to $P_\vT$, such that
\begin{equation}
\tag{$RRM_\xi$}
Q(A)=\int_{A} M^{(\ga)}_{t}(\vT)\,dP\quad\text{for all}\,\,\,0\leq u\leq t\,\,\text{and}\,\, A\in\F_{u},
\label{mart}
\end{equation}
where 
$$
M^{(\ga)}_{t}(\vT):= \xi(\vT)\cdot \wt M^{(\ga)}_{t}(\vT),
$$
and the family $M^{(\ga)}(\vT):=\{M^{(\ga)}_{t}(\vT)\}_{t\in\R_+}$  is a $P$--a.s. positive martingale in $\mathcal{L}^1(P)$ satisfying condition $\E_P[M^{(\ga)}_t(\vT)]=1$. 
\end{enumerate}
\end{prop}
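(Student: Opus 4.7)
The plan is to establish the four equivalences via the cyclic chain (i) $\Rightarrow$ (ii) $\Rightarrow$ (iii) $\Rightarrow$ (iv) $\Rightarrow$ (i). The endpoints are essentially immediate. For (i) $\Rightarrow$ (ii), note that $\sigma(\vT)\subseteq\F_t$ for every $t$, while $\sigma(W_n)$ and $\sigma(X_n)$ sit inside $\F^S_t\subseteq\F_t$ for $t$ large enough, so the argument of step (a) of Lemma \ref{lem2}'s proof delivers the three equivalences at the level of marginals. Conversely, for (iv) $\Rightarrow$ (i), taking $u=t$ in the Radon-Nikod\'{y}m formula identifies the strictly positive $M^{(\ga)}_t(\vT)$ as a $P$-density of $Q$ on $\F_t$, which immediately yields progressive equivalence.

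For (ii) $\Rightarrow$ (iii), I would feed (ii) directly into the proof of Lemma \ref{lem2}: Proposition \ref{crp} disintegrates both $P$ and $Q$ into CRPs outside a common $P_\vT$-null set, Lemma \ref{lem1} provides a positive $(P_\theta)_{W_1}$-density $r_\theta$ of $(Q_\theta)_{W_1}$, and the arguments of steps (d)-(e) of Lemma \ref{lem2} (which only use the $\sim$ relations of (ii) plus the conditional independence already contained in (a1)) produce $\ga\in\F_{P,h}$ together with the identity of (iii) restricted to $A\in\F^S_u$. It remains to upgrade from $\F^S_u$ to $\F_u=\sigma(\F^S_u\cup\sigma(\vT))$. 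For this I would apply a Dynkin class argument on the $\pi$-system $\{E\cap\vT^{-1}(B):E\in\F^S_u,\,B\in\B(D)\}$: since $\{P_\theta\}$ and $\{Q_\theta\}$ are consistent with $\vT$, one has $P_\theta(\vT^{-1}(B))=Q_\theta(\vT^{-1}(B))=\chi_B(\theta)$ outside a $P_\vT$-null set, so the identity on $\F^S_u$ transfers to these rectangles and extends to $\F_u$ by the Dynkin lemma.

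The substantive step is (iii) $\Rightarrow$ (iv). Since $Q_\vT\sim P_\vT$, set $\xi:=dQ_\vT/dP_\vT$, which belongs to $\mathcal{R}_+(D)$. For $A\in\F_u$ and $t\geq u$ I would chain
\[
Q(A)=\int_D Q_\theta(A)\,dQ_\vT(\theta)=\int_D \xi(\theta)\int_A\wt M_t^{(\ga)}(\theta)\,dP_\theta\,dP_\vT(\theta)=\int_A \xi(\vT)\,\wt M_t^{(\ga)}(\vT)\,dP,
\]
where the first equality is property (d2) of the rcp $\{Q_\theta\}$, the second substitutes the formula in (iii), and the third is the disintegration of $P$ along $P_\vT$ applied to the joint-measurable integrand $(\omega,\theta)\mapsto\chi_A(\omega)\xi(\theta)\wt M_t^{(\ga)}(\theta)(\omega)$, combined with the identity $\phi(\vT)=\phi(\theta)$ $P_\theta$-a.s.\ for every $\B(D)$-measurable $\phi$. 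Setting $M^{(\ga)}_t(\vT):=\xi(\vT)\cdot\wt M^{(\ga)}_t(\vT)$ then delivers the formula of (iv); strict positivity is inherited from the factors, the $\mathcal{L}^1(P)$ normalization comes from taking $A=\vO$, and the martingale property from applying the displayed identity with two nested times $u\le t\le t'$. The main obstacle is exactly this disintegration step: one must verify joint $(\omega,\theta)$-measurability of $\wt M_t^{(\ga)}(\theta)$, which is visible from its defining expression as a product of evaluations of the Markov kernels $\mathbf{K}(\theta)$ and $\mathbf{\Lambda}(\rho(\theta))$ and their Radon-Nikod\'{y}m derivatives, so that Fubini applies. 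The essential uniqueness of $(\ga,\xi)$ then reduces to the $P_{X_1}$-a.s.\ uniqueness of $\ga$ already given by Lemma \ref{lem2} and the $P_\vT$-a.s.\ uniqueness of $\xi$ as a Radon-Nikod\'{y}m derivative.
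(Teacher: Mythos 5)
Your proposal is correct and takes a genuinely different route through the cycle. The paper proves (i) $\Rightarrow$ (ii) $\Rightarrow$ (iv) $\Rightarrow$ (i) and then (iv) $\Rightarrow$ (iii) $\Rightarrow$ (i): it first assembles the global identity \eqref{mart} from (ii) via a rectangle Dynkin argument and the disintegration of $P$ along $P_\vT$, and then \emph{peels it back} to the conditional identity \eqref{rcp2} by decomposing $\F_u$ over $\{N_u=n\}$ (Claim~\ref{clm1}) so that $\F_u\cap\{N_u=n\}$ is countably generated, and running a second Dynkin argument to extract a single $P_\vT$-null set. You instead go (i) $\Rightarrow$ (ii) $\Rightarrow$ (iii) $\Rightarrow$ (iv) $\Rightarrow$ (i): you derive the conditional identity first by rerunning the proof of Lemma~\ref{lem2} with its step~(a) replaced by the hypothesis (ii) (which is legitimate, since step~(a) is the only place (i) is used), extend from $\F^S_u$ to $\F_u$ by a Dynkin argument over $\pi$-rectangles $E\cap\vT^{-1}[B]$ using the $P_\theta$--triviality of the sets $\vT^{-1}[B]$ that rcp-consistency with $\vT$ gives, and only then integrate with $\xi=dQ_\vT/dP_\vT$ against $P_\vT$ to obtain (iv). This ordering replaces the paper's more delicate \emph{reverse} disintegration (iv) $\Rightarrow$ (iii) by the easier \emph{forward} extension at the $\theta$-level followed by the same $P$-disintegration step the paper already uses in its (ii) $\Rightarrow$ (iv) step~(a). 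Two points you flagged only in passing but deserve to be made explicit if you write this out in full: the consistency identity $P_\theta(\vT^{-1}[B])=\chi_B(\theta)$ comes with a $B$-dependent null set, so you should restrict to a countable generator of $\B(D)$, unite the null sets, and for each remaining $\theta$ extend to all of $\B(D)$ by a second Dynkin argument before running the rectangle extension; and the joint $(\omega,\theta)$-measurability of $\wt M_t^{(\ga)}(\theta)(\omega)$ needed in (iii) $\Rightarrow$ (iv) is not automatic and is exactly what the product-rcp machinery of \cite{lm1v} and \cite{smm}, which the paper invokes, is supplying.
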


\begin{proof} Fix on arbitrary $t\geq 0$ and let $u\in[0,t]$. Note that the implication (i) $\Longrightarrow$ (ii) follows by Lemma \ref{lem2}.\smallskip

Ad  (ii) $\Longrightarrow$ (iv):  Since $Q_\vT\sim P_\vT$, it follows  that there exists a $P_\vT$--a.s. positive Radon-Nikod\'{y}m derivative $\xi\in\mathcal R_+(D)$. Assumption $Q_{X_1}\sim P_{X_1}$ together with \cite{mt3}, Lemma 2.2(a), implies the existence of a $P_{X_1}$--a.s. positive unique function $\ga\in\F_{P,h}$ satisfying condition \eqref{d1}. We split the remaining proof of this part into several steps. \medskip

{\bf (a)} For any $A\in\F_u$ condition $ \E_P[\chi_A\cdot  \wt M^{(\ga)}_{u}(\vT)]=\E_{P_\vT}\left[\E_{P_\theta}[\chi_A\cdot  \wt M^{(\ga)}_{u}(\theta)]\right]$ holds true.\smallskip

In fact,   put $\mu:=P\circ (id_\vO\times \vT)^{-1}$ and for any $A\in\F_u$ consider the $\F_u\otimes \B(D)$--measurable map $v:=\chi_A\cdot \wt M^{(\ga)}_{u}:\vO\times D\longrightarrow\R$.  We will show first that $v\in\mathcal{L}^1(\mu)$. Since $\{P_{\theta}\}_{\theta\in D}$ is a rcp of $P$ over $P_\vT$ consistent with $\vT$, it follows by \cite{lm1v}, Proposition 3.7, that  it is a product rcp on $\vS$  for $\mu$ with respect to $P_\vT$ (see \cite{smm}, Definition 1.1, for the definition and the properties of a product rcp); hence
\begin{align*}
\int v\,d\mu&=\int_D\int_\vO v^\theta\,dP_\theta\,P_\vT(d\theta) =\int_D\int_\vO \chi_A\cdot \wt  M^{(\ga)}_{u}(\theta)\,dP_\theta\,P_\vT(d\theta)\\
&=\int_D\E_{P_\theta}[\chi_A\cdot \wt  M^{(\ga)}_{u}(\theta)]\,P_\vT(d\theta) \leq\int_D P_\vT(d\theta) =1,
\end{align*}
where the first equality follows by \cite{lm1v}, Remark 3.4(c), and the inequality follows from Lemma \ref{lem2}. Thus, applying \cite{lm1v}, Proposition 3.8(ii), for  $v\circ (id_\vO\times \vT)$, we get the desired conclusion.\medskip

{\bf (b)} Consider the family of sets
\[
\G_u:=\left\{\bigcap_{k=1}^{m}A_k : A_k\in\F^S_u\cup\sigma(\vT),\, m\in\N\right\}.
\]
Then condition  $Q(G)=\int_{G} M^{(\ga)}_{u}(\vT)\,dP\quad\text{for all}\,\,\,  G\in\G_{u}$,
 holds true.\smallskip

In fact,  for any $G\in\G_u$ there exist an integer $m\in\N$ and a finite sequence $\{A_k\}_{k\in\{1,\ldots,m\}}$ in $\F^S_u\cup\sigma(\vT)$, such that $G=\bigcap_{k=1}^{m}A_k$. Putting
$$
I_\vT:=\left\{k\in\{1,\ldots,m\} : A_k\in\sigma(\vT)\right\}\quad\text{and}\quad I_H:=\left\{k\in\{1,\ldots,m\} : A_k\in\F^S_u\setminus \sigma(\vT)\right\}
$$
we get  $I_\vT\cup I_H=\{1,\ldots,m\}$,  $\bigcap_{k\in I_\vT} A_k\in\sigma(\vT)$ and $\bigcap_{k\in I_H} A_k\in\F^S_u$.   Since $\bigcap_{k\in I_\vT} A_k\in\sigma(\vT)$, there exists a set $F\in\B(D)$ such that $\bigcap_{k\in I_\vT} A_k=\vT^{-1}[F]$. The latter, together with the consistency of $\{Q_\theta\}_{\theta\in D}$ with $\vT$, yields  
\begin{align*}
Q(G)&=Q(\bigcap_{k\in I_H} A_k\cap\vT^{-1}[F]) \stackrel{(d2)}{=}\int_{F\cap L^c_{\ast}} Q_\theta(\bigcap_{k\in I_H} A_k)\,Q_\vT(d\theta)\\
&=\int_{F\cap L^c_{\ast}}  \E_{P_\theta}[\chi_{\bigcap_{k\in I_H} A_k}\cdot\wt M^{(\ga)}_u(\theta)]\,Q_\vT(d\theta) =\int_{G} M^{(\ga)}_u(\vT)\,dP,
\end{align*}
where the third equality follows by Lemma \ref{lem2} and the fourth one by (a) along with the fact that $\xi$ is a Radon-Nikod\'{y}m derivative of $Q_\vT$ with respect to $P_\vT$.\medskip

{\bf (c)} The family $M^{(\ga)}(\vT)$ is a martingale in $\mathcal{L}^{1}(P)$, and for every $A\in\F_u$ condition \eqref{mart} holds true.\smallskip

 In fact, if we denote by $\D_u$ is the family of all $A\in\F_u$ satisfying condition \eqref{mart}, it follows by (b) that $\G_u\subseteq \D_u$, while an easy computation justifies that $\D_u$ is a Dynkin class. Therefore, by the Dynkin Lemma we get
\begin{equation}
Q(A)=\int_{A} M^{(\ga)}_{u}(\vT)\,dP\quad\text{ for any }\,\,\, A\in\F_{u},
\label{3}
\end{equation} 
implying
$$
\int_{A} M^{(\ga)}_{u}(\vT)\,dP=\int_{A} M^{(\ga)}_{t}(\vT)\,dP\quad\text{ for any }\,\,\, A\in\F_{u};
$$
hence $M^{(\ga)}(\vT)$ is a martingale in $\mathcal{L}^{1}(P)$. The latter along with 
condition \eqref{3} yields  
\eqref{mart}.\medskip

{\bf (d)}  $M^{(\ga)}(\vT)$  is $P$--a.s. positive  and satisfies condition $\E_P[M^{(\ga)}_{t}(\vT)]=1$.\medskip

In fact, first note that condition \eqref{mart} for $A=\vO$ yields
\[
\E_P[M^{(\ga)}_{t}(\vT)]=\int_{\vO} M^{(\ga)}_{t}(\vT)\,dP=Q(\vO)=1.
\]
Furthermore, since  $\wt M^{(\ga)}(\theta)$ is a $P_{\theta}$--a.s. positive $\F^S$--martingale in $\mathcal{L}^1(P_\theta)$ for any $\theta\notin L_\ast$, applying Lemma \ref{lem2} we obtain
\[
P(\{\wt M_t^{(\ga)}(\vT)>0\})=\int_{L_\ast^c} P_\theta(\{\wt M_t^{(\ga)}(\theta)>0\})\,P_\vT(d\theta)=1,
\]
implying, together with condition $P_\vT(\{\xi>0\})=1$, that $M^{(\ga)}(\vT)$  is $P$--a.s. positive. 
\medskip

Ad (iv) $\Longrightarrow$ (i):  The implication  (iv) $\Longrightarrow$ (i)  is immediate, since condition \eqref{mart} along with $P(\{M_t^{(\ga)}(\vT)>0\})=1$ implies (i).\medskip

Ad (iv) $\Longrightarrow$ (iii):  Fix on arbitrary $A\in\mathcal{F}_u, F\in\mf{B}(D)$ and $n\in\N_0$. We first establish the validity of the following claim: 

\begin{clm}
	\label{clm1}
	Condition 
	\[
	\F_u\cap\{N_u=n\}=\sigma(\F^X_n\cup\F^W_n\cup\sigma(\vT))\cap\{N_u=n\}
	\]
	holds true.
\end{clm}  
  \begin{proof}
	Since inclusion $\sigma(\F^X_n\cup\F^W_n\cup\sigma(\vT))\cap\{N_u=n\}\subseteq\F_u\cap\{N_u=n\}$
	is obvious, we only have to  show the inverse one. To this purpose, let $\G_u$, $I_\vT$ and $I_H$ be as in step (b).  For any $G\in\G_u$ there exist an integer $m\in\N$ and a finite sequence $\{A_k\}_{k\in\{1,\ldots,m\}}$ in $\F^S_u\cup\sigma(\vT)$, such that $G=\bigcap_{k=1}^{m}A_k$. Then there exists a set $B\in\B(D)$ such that $\bigcap_{k\in I_\vT} A_k=\vT^{-1}[B]$; hence 
	\[
	G\cap\{N_u=n\}=\left(\bigcap_{k\in I_H} A_k\cap\vT^{-1}[B]\right)\cap\{N_u=n\}=\left(\bigcap_{k\in I_H} A_k\cap\{N_u=n\}\right)\cap\vT^{-1}[B].
	\] 
	But since $\bigcap_{k\in I_H} A_k\in\F^S_u$, we get by \cite{mt3}, Lemma 2.1, that there exists a set $B_n\in\sigma(\F^X_n\cup\F^W_n)$ such that 
	\[
	\bigcap_{k\in I_H} A_k\cap\{N_u=n\}=B_n\cap\{N_u=n\};
	\]
	hence
	\[
	G\cap\{N_u=n\}=\left(B_n\cap\{N_u=n\}\right)\cap\vT^{-1}[B]=\left(B_n\cap\vT^{-1}[B]\right)\cap\{N_u=n\},
	\]
	implying
	\[
	\G_u\cap\{N_u=n\}\subseteq \sigma(\F^X_n\cup\F^W_n\cup\sigma(\vT))\cap\{N_u=n\},
	\]
	completing in this way the proof of the claim. 
\end{proof}

Since $\{Q_{\theta}\}_{\theta\in D}$ is a rcp of $Q$ over $Q_\vT$ consistent with $\vT$  we get
\begin{align*} 
Q\left((A\cap\{N_u=n\})\cap\vT^{-1}[F]\right)&\stackrel{(d2)}{=}\int_F Q_\theta(A\cap\{N_u=n\})\,Q_\vT(d\theta)\\
&=\int_F \xi(\theta)\cdot Q_\theta(A\cap\{N_u=n\})\,P_\vT(d\theta).
\end{align*}
Moreover, since $(A\cap\{N_u=n\})\cap\vT^{-1}[F]\in \F_u$, we get by (iv) that 
\begin{align*}
Q\left((A\cap\{N_u=n\})\cap\vT^{-1}[F]\right)&=\int_{A\cap\{N_u=n\})\cap\vT^{-1}[F]} M_t^{(\ga)}(\vT)\,dP\\
&= \int_{A\cap\{N_u=n\})\cap\vT^{-1}[F]} \xi(\vT)\cdot\wt M_t^{(\ga)}(\vT)\,dP\\
&= \int_{\vT^{-1}[F]} \xi(\vT)\cdot\E_P[\chi_{A\cap\{N_u=n\}}  \cdot\wt M_t^{(\ga)}(\vT)\mid\vT]\,dP\\
&= \int_{F} \xi(\theta)\cdot\E_{P_\theta}[\chi_{A\cap\{N_u=n\}}\cdot\wt M_t^{(\ga)}(\theta)]\,P_\vT(d\theta),
\end{align*}
where the last equality follows by \cite{lm1v}, Lemma 3.5; hence
\[
\int_F \xi(\theta)\cdot Q_\theta(A\cap\{N_u=n\})\,P_\vT(d\theta)=\int_{F} \xi(\theta)\cdot\E_{P_\theta}[\chi_{A\cap\{N_u=n\}}\cdot\wt M_t^{(\ga)}(\theta)]\,P_\vT(d\theta),
\]
implying that there exists a $P_\vT$--null set $L_{\ast\ast,A\cap\{N_u=n\}}\in\B(D)$, containing $L_\ast$, such that  
\begin{equation} 
Q_\theta(A\cap\{N_u=n\}) = \E_{P_\theta}[\chi_{A\cap\{N_u=n\}}\cdot\wt M_t^{(\ga)}(\theta)]
\label{equ1}
\end{equation}
for any $\theta\notin L_{\ast\ast,A\cap\{N_u=n\}}$.

But, due to Claim \ref{clm1}, the $\sigma$--algebra $\F_u\cap\{N_u=n\}$ is countably generated; hence there exists a countable generator $\mathcal{G}_{u,n}$ of its, closed under finite intersections. Putting 
\[
L_{\ast\ast}:=\bigcup\{L_{\ast\ast,A\cap\{N_u=n\}}: A\cap\{N_u=n\}\in\mathcal{G}_{u,n}\},
\]
we get a $P_{\vT}$--null set in $\mf{B}(D)$ such that condition \eqref{equ1} holds for any $A\cap\{N_u=n\}\in\mathcal{G}_{u,n}$ and $\theta\notin{L}_{\ast\ast}$. It is then easy to see that the family $\mathcal{D}_{u,n}$ of all elements of $\F_u\cap\{N_u=n\}$ satisfying condition \eqref{equ1} for any $\theta\notin{L}_{\ast\ast}$ is a Dynkin class containing $\mathcal{G}_{u,n}$. Thus, $\mathcal{D}_{u,n}=\F_u\cap\{N_u=n\}$ by the Dynkin Lemma, i.e. condition \eqref{equ1} holds  for any $A\in\F_u$ and $\theta\notin{L}_{\ast\ast}$. 

 The latter, along with   Proposition \ref{crp} as well as Remark \ref{dis}(a), yields
\[
	Q_{\theta}(A)=\sum_{n=0}^{\infty}Q_{\theta}(A\cap\{N_u=n\})=\E_{P_{\theta}}[\chi_A\cdot\wt{M}_t^{(\ga)}(\theta)] \quad \text{ for any }\,\, \theta\notin{L}_{\ast\ast}.
\]
 
Ad (iii) $\Longrightarrow$ (i):   Assuming assertion  (iii)  we get that $Q_\theta\uph\F_t\sim P_\theta\uph\F_t$ for any $t\geq 0$ and $\theta\notin L_{\ast\ast}$. But since $Q_\vT\sim{P}_\vT$ assertion  (i)  is immediate. 
\end{proof}

 Due to Proposition \ref{thm1}, under the weak conditions $Q_{X_{1}}\sim P_{X_{1}}$,  $Q_{W_{1}}\sim P_{W_{1}}$ and $Q_\vT\sim P_\vT$ the measures $P$ and $Q$ are equivalent on each  $\sigma$--algebra $\F_t$. We  will show here that this result does not, in general, hold true for the $\sigma$--algebra $\F_\infty$.

\begin{prop}\label{lem5} 
 Let  be given $Q\in\M_{S,{\bf \Lambda}(\Ttheta)}$ and $\{Q_{\theta}\}_{\theta\in D}$ a rcp  of $Q$ over $Q_\vT$ consistent with $\vT$. The following assertions hold true:   
\begin{enumerate}
\item 
if there exists a $P_\vT$--null set $\wh{L}_1$ in $\mf{B}(D)$ such that $P_\theta=Q_\theta$ for any $\theta\notin \wh{L}_1$, then the measures $P$ and $Q$ are equivalent on $\F_\infty$;  
\item 
if there exists a $P_\vT$--null set $\wh{L}_2$ in $\mf{B}(D)$  such that  $P_\theta\neq Q_\theta$  for any $\theta\notin \wh{L}_2$, then the measures $P$ and $Q$ are singular on $\F_\infty$, i.e. there exists a set $E\in\F_{\infty}$ such that $P(E)=0$ if and only if $Q(E)=1$.
\end{enumerate}
\end{prop}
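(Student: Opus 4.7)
Both assertions turn on the disintegration identity (d2) of Definition \ref{rcp}: for each $E\in\F_\infty\subseteq\vS$ one has $P(E)=\int P_\theta(E)\,P_\vT(d\theta)$ and $Q(E)=\int Q_\theta(E)\,Q_\vT(d\theta)$, combined with the equivalence $Q_\vT\sim P_\vT$ already established in Proposition \ref{thm1}(ii). Assertion (i) is then immediate: on $\wh{L}_1^c$ we have $P_\theta(E)=Q_\theta(E)$, so
\[
P(E)=\int_{\wh{L}_1^c}P_\theta(E)\,P_\vT(d\theta),\qquad Q(E)=\int_{\wh{L}_1^c}P_\theta(E)\,Q_\vT(d\theta),
\]
and these two integrals vanish together because $P_\vT\sim Q_\vT$; hence $P\uph\F_\infty\sim Q\uph\F_\infty$.

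For (ii), I would first invoke Proposition \ref{crp} to recognise, for $\theta$ outside a $P_\vT$-null set, that the sequence $(W_n,X_n)_{n\in\N}$ is i.i.d.\ under $P_\theta$ with joint marginal $\mathbf{K}(\theta)\otimes P_{X_1}$ (independence of $W$ and $X$ coming from (a1)--(a2)), and i.i.d.\ under $Q_\theta$ with joint marginal $\mathbf{\Lambda}(\rho(\theta))\otimes Q_{X_1}$. The hypothesis $P_\theta\neq Q_\theta$ on $\wh{L}_2^c$ therefore forces disagreement of the pairs $(\mathbf{K}(\theta),P_{X_1})$ and $(\mathbf{\Lambda}(\rho(\theta)),Q_{X_1})$ for every such $\theta$. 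Fixing a countable $\pi$-system $\{B_j\}_{j\in\N}\subseteq\B(\vY)$ that separates Borel probability measures on $\vY$ (for instance $\{(0,q]:q\in\mathbb{Q}_+\}$), I propose the candidate set
\[
E:=\bigcap_{j\in\N}\bigl\{\tfrac{1}{n}\textstyle\sum_{i=1}^{n}\chi_{B_j}(W_i)\to\mathbf{K}(\vT)(B_j)\bigr\}\cap\bigcap_{j\in\N}\bigl\{\tfrac{1}{n}\textstyle\sum_{i=1}^{n}\chi_{B_j}(X_i)\to P_{X_1}(B_j)\bigr\}.
\]
By the classical strong law of large numbers applied under each $P_\theta$ (and using $\vT=\theta$ $P_\theta$-a.s.\ by consistency of the rcp), every defining limit holds $P_\theta$-almost surely, whence $P_\theta(E)=1$ and $P(E)=1$ after integration. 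Under $Q_\theta$ for $\theta\notin\wh{L}_2$, the same empirical averages instead converge $Q_\theta$-a.s.\ to $\mathbf{\Lambda}(\rho(\theta))(B_j)$ and $Q_{X_1}(B_j)$; because $\{B_j\}$ separates, the marginal disagreement is witnessed at some index $j_0$, so one of the defining equalities of $E$ fails $Q_\theta$-almost surely, yielding $Q_\theta(E)=0$ and therefore $Q(E)=0$.

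The main obstacle is the measurability book-keeping needed to place $E$ inside $\F_\infty$. For fixed $j$ and $n$, the empirical proportion is $\F^S_\infty$-measurable via the arrival-interarrival encoding of the counting process $N$, while $\omega\mapsto\mathbf{K}(\vT(\omega))(B_j)$ is $\sigma(\vT)$-measurable by property (k2) of the Markov kernel $\mathbf{K}$; the pointwise-limit equalities therefore lie in $\F_\infty$. A secondary but routine point is that $\{B_j\}$ actually detects the disagreement forced by $P_\theta\neq Q_\theta$, which reduces to the standard facts that two Borel probability measures on $(0,\infty)$ coincide iff they agree on a $\pi$-system generating $\B(\vY)$, and that two product measures are equal iff both their marginals are.
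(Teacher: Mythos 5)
Your part (i) is correct and in fact cleaner than the paper's: the paper passes through the $\pi$--system $\G_\infty$ of finite intersections of sets from $\F^S_\infty\cup\sigma(\vT)$ and extends by a Dynkin--class argument, whereas you read mutual absolute continuity directly from (d2), noting that $\{\theta : P_\theta(E)=0\}$ is simultaneously $P_\vT$-- and $Q_\vT$--conegligible once $P_\theta=Q_\theta$ off a common null set and $P_\vT\sim Q_\vT$. This sidesteps the (slightly delicate) point that the class $\{A:Q(A)=0\Leftrightarrow P(A)=0\}$ behaves well under the Dynkin operations, which the paper's argument relies on.

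Your part (ii) takes a genuinely different route. The paper reduces to fibrewise compound renewal processes via Proposition \ref{crp} and Proposition \ref{thm1}(i)$\Rightarrow$(ii), then delegates the $\F^S_\infty$--singularity of $P_\theta$ and $Q_\theta$ to \cite{mt3}, Remark 3.1, and asserts singularity of $P$ and $Q$. You instead build an explicit separating event $E\in\F_\infty$ via the strong law of large numbers, made uniform in $\theta$ through the $\sigma(\vT)$--measurable target $\mathbf{K}(\vT)(B_j)$, and this makes fully transparent the final step (from singularity $P_\theta\perp Q_\theta$ for a.e.\ $\theta$ to $P\perp Q$) that the paper leaves implicit. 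The one point where your proposal is as terse as the paper's is the same in both: to pass from $P_\theta\neq Q_\theta$ on $\wh{L}_2^c$ to disagreement of the pairs $(\mathbf{K}(\theta),P_{X_1})$ and $(\mathbf{\Lambda}(\rho(\theta)),Q_{X_1})$, one needs that on $\F_\infty$ the measures $P_\theta$, $Q_\theta$ are determined by the law of $(W,X,\vT)$, i.e.\ that $\vS=\F_\infty$ as in the canonical setting of Proposition \ref{prop1}, or that the hypothesis ``$P_\theta\neq Q_\theta$'' is read as inequality of restrictions to $\F_\infty$. This is an interpretive point built into the statement, not a defect peculiar to your argument. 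Modulo that, your construction is correct; you should only make explicit that the SLLN conclusions are invoked for $\theta$ outside the union of $\wh{L}_2$ with the $P_\vT$--null sets $L_P$, $L_Q$ furnished by Proposition \ref{crp}, and that the intersection over $j\in\N$ preserves $P_\theta$--probability one.
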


\begin{proof} First note that since $Q\in\M_{S,{\bf \Lambda}(\Ttheta)}$, we get  $Q_\vT\sim P_\vT$ by Proposition \ref{thm1}. \smallskip

Ad (i): Assume that $P_\theta=Q_\theta$ for any $\theta\notin\wh{L}_1$, and consider the family 
\[
\G_\infty:=\left\{\bigcap_{k=1}^{m}A_k : A_k\in\F^S_\infty\cup\sigma(\vT),\, m\in\N\right\}.
\]
We then have $Q\uph\mathcal{G}_\infty\sim{P}\uph\mathcal{G}_\infty$.

In fact, let $A\in\G_\infty$ such that $Q(A)=0$. It then follows that there exists a number $m\in\N$  and a finite sequence $\{A_k\}_{k\in\{1,\ldots,m\}}$ in $\F^S_\infty\cup\sigma(\vT)$ such that $A=\bigcap_{k=1}^{m}A_k$. Putting
	\[
	I_\vT:=\left\{k\in\{1,\ldots,m\} : A_k\in\sigma(\vT)\right\}\quad\text{and}\quad I_H:=\left\{k\in\{1,\ldots,m\} : A_k\in\F^S_\infty\setminus \sigma(\vT)\right\}
	\]
	we get  $I_\vT\cup I_H=\{1,\ldots,m\}$,  $\bigcap_{k\in I_\vT} A_k\in\sigma(\vT)$ and $C:=\bigcap_{k\in I_H} A_k\in\F^S_\infty$.   Since $\bigcap_{k\in I_\vT} A_k\in\sigma(\vT)$, there exists a set $B\in\B(D)$ such that $\bigcap_{k\in I_\vT} A_k=\vT^{-1}[B]$, implying
\[
0=Q(C\cap\vT^{-1}[B])=\int_B Q_\theta(C)\,Q_\vT(d\theta).
\]

If $Q_\vT(B)=0$ then $P_\vT(B)=0$ by $Q_\vT\sim{P}_\vT$; hence $P(A)=0$.
\smallskip

If $Q_\vT(B)>0$ then there exists a $Q_\vT$--null set $L_{Q,C}\in\mf{B}(D)$ such that $Q_\theta(C)=0$ for any $\theta\notin{L}_{Q,C}$, implying that $P_\theta(A)=0$. In the same way, replacing $Q$ with $P$, we get that $P(A)=0$ implies $Q(A)=0$ for any $A\in\mathcal{G}_\infty$. Considering now the family $\mathcal{D}_\infty$ of all $A\in\F_\infty$ such that $Q(A)=0$ if and only if $P(A)=0$, we have that $\mathcal{D}_\infty$ is a Dynkin class containing $\mathcal{G}_\infty$. Thus, we may apply the Dynkin Lemma in order to conclude the validity of (i).

\medskip

 Ad (ii): By Proposition \ref{crp} along with Proposition \ref{thm1}, (i) $\Longrightarrow$ (ii), there exists a $P_\vT$-- and $Q_\vT$--null set $\wt{L}=L_P\cup{L}_Q$ such that the aggregate process $S$ is a $P_\theta$-CRP($\mathbf{K}(\theta),P_{X_1}$) with $P_{X_1}=(P_\theta)_{X_1}$ and a 
$Q_\theta$--CRP($\mathbf{\Lambda}(\rho(\theta)),Q_{X_1}$) with $Q_{X_1}=(Q_\theta)_{X_1}$ for any $\theta\notin\wt{L}$. Thus, applying \cite{mt3}, Remark 3.1, we get that for any $\theta\notin\wh{L}_2\cup\wt{L}$ the measures $P_\theta$ and $Q_\theta$ are singular on $\F_\infty^S$ ; hence on $\F_\infty$. Consequently, $P$ and $Q$ are singular on $\F_\infty$. 
\end{proof}

Proposition \ref{thm1} allows us to explicitly calculate Radon-Nikod\'{y}m derivatives for various cases appearing in applications. In the next example we consider the mixed Poisson process (cf. e.g. \cite{Sc}, page 87 for its  definition). A common choice for the distribution of $\vT$ in Risk Theory is the Gamma distribution. In the case of a mixed Poisson process that process is called P\'{o}lya-Lundberg process (cf. e.g. \cite{Sc}, page 100 for its definition and  basic properties). In order to present our first example recall  the  inverted Gamma distribution  with parameters   $a,b\in\vY$ (written ${\bf IG}(b,a)$ for short), i.e. 
$$
{\bf IG}(b,a)(B):=\int_B \frac{b^{a}}{\Gamma(a)}\cdot x^{-(a-1)}\cdot e^{-b/x}\,\lambda(dx)\quad\text{for any}\,\, B\in\B(\vY).
$$

\begin{ex}\label{ex1} 
	\normalfont
	Take $D:=\vY$, $\rho\in\mf{M}_+(\vY)$ defined by means of $\rho(x):=1/x$, $h:=\ln$, $\vT:\vO\longrightarrow \vY$, $P\in\M_{S,{\bf Exp}(\vT)}$ and $Q\in\M_{S,{\bf Exp}(\Ttheta)}$, such that  $P_\vT={\bf Ga}(b_1,a_1)$ and $Q_{\vT}={\bf IGa}(b_2,a_2)$ with $a_1,a_2,b_1,b_2>0$. 	By Proposition \ref{thm1}  there exists an essentially unique pair $(\ga,\xi)\in\F_{P,h}\times\mathcal{R}_+(\vY)$ such that 
	$$
	Q(A)=\int_A M^{(\ga)}_t(\vT) \,dP\quad\text{for all}\,\,\,0\leq u\leq t\,\,\text{and}\,\, A\in\F_{u},
	$$
	where $M^{(\ga)}_t(\vT):=\frac{b_2^{a_2}}{b_1^{a_1}}\cdot\frac{\Gamma(a_1)}{\Gamma(a_2)}\cdot\frac{e^{b_1\vT-b_2/\vT}}{\vT^{a_1+a_2}}\cdot e^{\sum^{N_{t}}_{j=1}\gamma(X_{j})}\cdot\left(\frac{\Ttheta}{\vT}\right)^{N_t}\cdot e^{-t (\Ttheta -\vT)}$.
\end{ex}

\section{The Characterization}\label{char}

Before we formulate the inverse of Proposition \ref{thm1}, i.e. that for a given pair $(\ga,\xi)\in\F_{P,h}\times\mathcal{R}_+(D)$ there exists a unique probability measure $Q\in\M_{S,{\bf \Lambda}(\Ttheta)}$, we have to prove the following result concerning the construction of compound mixed renewal  processes. To this purpose we recall the following notations concerning product probability spaces. \smallskip

By $(\vO\times\varXi,\vS\otimes{H},P\otimes{R})$ we denote  the product probability space of the probability spaces $(\vO,\vS,P)$ and $(\varXi,H,R)$.  If  $I$ is an arbitrary non-empty index set, we write $P_I$ for the product measure on $\vO^I$ and $\vS_I$ for its domain.\smallskip

{\em{Throughout what follows, we put $\wt{\vO}:=\vY^{\N}\times\vY^{\N}$, $\wt{\vS}:=\B(\wt{\vO})=\B(\vY)_{\N}\otimes\B(\vY)_{\N}$, $\vO:=\wt\vO\times D$ and $\vS:=\wt\vS\otimes\B(D)$ for simplicity.}}\smallskip

The following result enables us to construct {\em{canonical}} probability spaces admitting compound mixed renewal processes.

\begin{prop}\label{prop1} 
Let $\mu$ be a probability measure on $\B(D)$,  and  for any $n\in\N$ and fixed $\theta\in D$ let $P_{n}(\theta):={\bf{K}}(\theta)$ and $R_n:=R$ be probability measures on $\B(\vY)$. Assume that for any fixed $B\in\B(\vY)$ the function $\theta\longmapsto{\bf{K}}(\theta)(B)$ is $\B(D)$--measurable. Then there exist:
\begin{enumerate}
\item
a family $\{P_{\theta}\}_{\theta\in D}$ of probability measures $P_\theta:={\bf K}\left(\theta\right)_{\N}\otimes R_\N\otimes\delta_\theta$ on $\vS$, where $\delta_\theta$ is the Dirac measure on $\B(D)$ concentrated on $\theta$, and a probability measure $P$ on $\vS$ such that $\{P_{\theta}\}_{\theta\in D}$ is a rcp of $P$ over $\mu$ consistent with $\vT:=\pi_D$, where $\pi_D$ is the canonical projection from $\vO$ onto $D$, and $P_\vT=\mu$;
\item
a counting process $N$ being a $P$--MRP$({\bf K}(\vT))$, the   interarrival process $W$ of which satisfies condition $(P_\theta)_{W_n}=\mathbf{K}(\theta)$ for all $n\in\N$, a claim size process $X$ satisfying condition $P_{X_n}=R$ for all $n\in\N$,  such that the quadruplet $(P,W,X,\vT)$ satisfies conditions (a1) and (a2), and
 an aggregate claims process $S$ being a $P$--CMRP$({\bf K}(\vT),P_{X_1})$. 
\end{enumerate} 
\end{prop}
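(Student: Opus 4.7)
The plan splits naturally along the two assertions. For (i) I would construct $\{P_\theta\}_{\theta\in D}$ and $P$ directly, then verify (d1)--(d2), consistency with $\vT:=\pi_D$, and $P_\vT=\mu$. For (ii) I would take coordinate projections on $\vO=\wt\vO\times D=\vY^\N\times\vY^\N\times D$ as the interarrival and claim size processes, read off the $P_\theta$--structure from the product form of $P_\theta$, and pass to the $P$--CMRP structure via Proposition \ref{crp}.

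\textbf{Kernel and measure.} Set $P_\theta:={\bf K}(\theta)_\N\otimes R_\N\otimes\delta_\theta$, a well-defined probability on $\vS=\wt\vS\otimes\B(D)$ for each $\theta\in D$. The crucial point is the $\B(D)$--measurability of $\theta\mapsto P_\theta(E)$ for all $E\in\vS$; I would handle this in two monotone-class stages. On finite-dimensional cylinders $C=B_1\times\cdots\times B_n\times\vY^\N$, the value ${\bf K}(\theta)_\N(C)=\prod_{k=1}^n{\bf K}(\theta)(B_k)$ is $\B(D)$--measurable by the assumed kernel property, and a Dynkin argument extends this to all of $\B(\vY)_\N$. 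Factoring $R_\N$ off the second copy of $\vY^\N$ and multiplying by $\chi_B(\theta)$ for $B\in\B(D)$ yields measurability on rectangles in $\vS$, and a second Dynkin argument extends it to all of $\vS$. I then define $P(E):=\int_D P_\theta(E)\,\mu(d\theta)$, so that (d1) is the measurability just established and (d2) is the definition. Consistency is immediate: $\vT^{-1}[B]=\wt\vO\times B$ gives $P_\theta(\vT^{-1}[B])=\chi_B(\theta)$, which equals $1$ for every $\theta\in B$; and $P_\vT(B)=\int\chi_B(\theta)\,\mu(d\theta)=\mu(B)$.

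\textbf{Processes and the CMRP property.} Let $W_n,X_n$ be the projections of $\vO$ onto the $n$--th coordinate of the first and second copies of $\vY^\N$, set $T_n:=\sum_{k=1}^n W_k$, $N_t:=\sup\{n\in\N_0:T_n\le t\}$, and let $S$ be the aggregate claims process induced by $(N,X)$. Under each $P_\theta$ the product form of $P_\theta$ immediately yields that $W$ is i.i.d. with law ${\bf K}(\theta)$, $X$ is i.i.d. with law $R$, and $W$, $X$, $\vT$ are $P_\theta$--independent with $\vT=\theta$ a.s.; hence $N$ is a $P_\theta$--RP$({\bf K}(\theta))$, $(N,X)$ is a $P_\theta$--risk process, and $S$ is a $P_\theta$--CRP$({\bf K}(\theta),R)$, with $P_{X_1}=R$ obtained by integrating $(P_\theta)_{X_1}=R$ against $P_\vT=\mu$. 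Remark \ref{dis}(a) then disposes of the explosion event. Conditions (a1) and (a2) for $P$ follow from the common product structure: for $A\in\sigma(W)$, $B\in\sigma(X)$ the conditional probability $P(A\cap B\mid\sigma(\vT))$ is represented by the rcp as $P_\theta(A\cap B)=P_\theta(A)\cdot P_\theta(B)$ evaluated at $\vT$, which is (a1); while $(P_\theta)_{X_n}=R$ being independent of $\theta$ gives, upon integration, $P(A\cap\vT^{-1}[F])=R_\N(A)\cdot\mu(F)=P(A)\cdot P(\vT^{-1}[F])$ for $A\in\sigma(X)$ and $F\in\B(D)$, which is (a2). With (a1), (a2), and the $P_\theta$--CRP structure in hand, the direction (ii)$\Longrightarrow$(i) of Proposition \ref{crp} delivers that $S$ is a $P$--CMRP$({\bf K}(\vT),P_{X_1})$.

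\textbf{Main obstacle.} The only genuinely non-routine work is the two-stage monotone-class extension in the first step: given the kernel property of ${\bf K}(\cdot)$ on $\B(\vY)$, promoting $\theta\mapsto P_\theta(E)$ to $\B(D)$--measurability on all of $\vS$ is essentially the Markov-kernel version of the construction of infinite product measures, and must be handled consistently under the additional tensor factors $R_\N\otimes\delta_\theta$. Everything else follows directly from the product form of $P_\theta$, the defining properties (d1)--(d2) of the rcp, and Proposition \ref{crp}.
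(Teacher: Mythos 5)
Your proposal is correct and follows the same overall plan as the paper's proof: construct $P_\theta$ and the mixture $P$, verify the rcp structure, build the coordinate processes $W,X$ on the product space, read off the $P_\theta$--CRP structure together with conditions (a1) and (a2), and conclude via the implication (ii) $\Rightarrow$ (i) of Proposition~\ref{crp}. The only real difference is one of packaging: where the paper delegates the technical sub-steps to cited results (it first builds $\widetilde{P}_\theta$ on $\widetilde{\varSigma}$ and appeals to Proposition~3.5 of \cite{lm5} for the rcp consistency, then to Lemma~4.1 of \cite{lm1v} together with \cite{lm1err} for (a1), and to Lemma~3.3(ii) of \cite{lm3} for the unconditional i.i.d.\ property of $X$), you carry those verifications out directly from the product form of $P_\theta$ and the definition $P(E)=\int P_\theta(E)\,\mu(d\theta)$. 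Your direct checks of (d1)--(d2), of consistency via $P_\theta(\varTheta^{-1}[B])=\chi_B(\theta)$, and of (a1)--(a2) via factorization of $P_\theta$ are all sound and amount to in-line proofs of exactly what the paper cites; the two-stage monotone-class argument you flag as the main obstacle is precisely the content of step (a) of the paper's proof.
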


\begin{proof} Fix on arbitrary $\theta\in D$ and $n\in\N$, and consider the product probability space $(\wt\vO,\wt\vS,\wt P_\theta)$ constructed in \cite{mt3}, page 51, where $\wt P_\theta:=\left(\otimes_{n\in\N} P_n(\theta)\right)\otimes R_\N$. We split the proof into several steps. The first two steps establish the validity of (i), the remaining concern assertion (ii) of the proposition.\smallskip

{\bf (a)} For any fixed $F\in\wt\vS$ the function $\theta\longmapsto\wt{P}_\theta(F)$ is $\mf{B}(D)$--measurable.\smallskip

In fact, since by assumption, for any fixed $B\in\B(\vY)$ each function $\theta\longmapsto P_{n}(\theta)(B)$  is $\B(D)$--measurable, it follows by a monotone class argument that the same holds true for the function $\theta\longmapsto\wt{P}_{\theta}(F)$ for any fixed $F\in\wt{\vS}$. \medskip

{\bf (b)} Define the set-functions $\wt{P}:\wt{\vS}\longrightarrow \R_+$ and $P:\vS\longrightarrow\R_+$ by means of 
\[
\wt{P}(F):=\int \wt{P}_{\theta}(F)\,\mu(d\theta)\quad\text{for all}\quad F\in\wt{\vS}.
\]
and
\[
P(E):=\int \wt{P}_{\theta}(E^{\theta})\,\mu(d\theta)\quad\mbox{for each}\quad E\in\vS, 
\]
where $E^{\theta}:=\left\{\wt\omega\in\wt{\vO}:(\wt\omega,\theta)\in E\right\}$ is the $\theta$--section of $E$, respectively.
Then $\wt{P}$ and $P$ are probability measures on $\wt{\vS}$ and $\vS$, respectively, such that $\{P_{\theta}\}_{\theta\in D}$ is a rcp of $P$ over $\mu$ consistent with $\vT$ and $P_\vT=\mu$.\smallskip

In fact, obviously $\wt{P}$ and $P$ are probability measures on $\wt\vS$ and $\vS$, respectively.  
 It is easy to see that $\{\wt{P}_{\theta}\}_{\theta\in D}$ is a product rcp on $\wt{\vS}$ for $P$ with respect to $\mu$ (see \cite{smm}, Definition 1.1, for the definition and its properties). Put $P_{\theta}:=\wt{P}_{\theta}\otimes\delta_{\theta}$. Clearly, $P_{\theta}$ is a probability measure on $\vS$. So, we may apply \cite{lm5}, Proposition 3.5, to get that $\{P_{\theta}\}_{\theta\in D}$ is a rcp of $P$ over $\mu$ consistent with the canonical projection $\pi_D$ from $\vO$ onto $D$. Putting $\vT:=\pi_D$ we get $P_{\vT}=\mu$, completing in this way the proof of statement (i).\medskip

{\bf(c)} There exists a counting process $N$ and a claim size process $X$ such that the quadruplet $(P,W,X,\vT)$ satisfies conditions (a1), (a2), $W$ is $P_\theta$--i.i.d. and the pair $(N,X)$ is both a $P$-- and $P_\theta$-- risk process for any $\theta\in{D}$.\smallskip

In fact, denote by $\pi_{\vO\wt\vO}$ the canonical projection from $\vO$ onto $\wt\vO$, and by $\wt{W}_n$ and $\wt{X}_n$ the canonical projections from $\vO$ onto the $n$--coordinate of the first and the second factor of $\vO=\vY^{\N}\times\vY^{\N}\times{D}$, respectively. Put $W_{n}:=\wt{W}_{n}\circ \pi_{\vO\wt\vO} $ and $X_{n}:=\wt{X}_{n}\circ \pi_{\vO\wt\vO}$ and get 
\begin{equation}
\mathbf{K}(\theta)=(P_\theta)_{W_n}=(\wt{P}_\theta)_{W_n}\quad\mbox{and}\quad R=(P_\theta)_{X_n}=(\wt{P}_\theta)_{X_n}.
\label{equ4}
\end{equation}
Since $(\wt\vO,\wt\vS,\wt{P}_{\theta})$ is a product probability space and $\wt{W}_n$, $\wt{X}_n$ are the canonical projections, applying standard computations we get that the processes $W:=\{W_k\}_{k\in\N}$ and $X:=\{X_k\}_{k\in\N}$ are $\wt{P}_\theta$--independent and $\wt{P}_\theta$--mutually independent; hence the they are $P_\theta$--independent and $P_\theta$-mutually independent. Putting $T_k:=\sum_{m=1}^{k} W_m$ for any $k\in\N_0$ and $T:=\{T_k\}_{k\in\N_0}$, we obtain that $N:=\{N_t\}_{t\in\R_+}$ is the counting process induced by $T$ by means of $N_t:=\sum_{k=1}^{\infty}\chi_{\{T_k\leq t\}}$ for all $t\in\R_+$. The fact that $W$ and $X$ are $P_\theta$--mutually independent along with condition \eqref{equ4}, yields that the processes $N$ and $X$ are $P_\theta$--mutually independent. Thus, the pair $(N,X)$ is a $P_\theta$--risk process. 

Since $\{P_\theta\}_{\theta\in{D}}$ is a rcp of $P$ over $\mu$ consistent with $\vT$ by (b), applying \cite{lm1v}, Lemma 4.1, along with \cite{lm1err}, we obtain that $W$ and $X$ are $P$--conditionally independent, i.e. $P$ satisfies condition (a1). Furthermore, condition \eqref{equ4} again together with the fact that $\{P_\theta\}_{\theta\in{D}}$ is a rcp of $P$ over $\mu$ consistent with $\vT$ by (b), implies that $P_{X_n}=R$ and condition (a2) is satisfied by $P$. Thus, taking into account the fact that $X$ is $P_\theta$--i.i.d., we may apply \cite{lm3}, Lemma 3.3(ii), in order to conclude that the process $X$ is $P$--i.i.d.. Therefore the pair $(N,X)$ is a $P$--risk process.\medskip

{\bf (d)} The aggregate claims process $S$ induced by $(N,X)$ is a $P$--CMRP($\mathbf{K}(\vT),P_{X_1}$).\smallskip

In fact, since  the sequence $W$ is $P_\theta$--i.i.d. by (c), it follows that $N$ is a $P_\theta$--RP($\mathbf{K}(\theta))$, implying together with the fact that $(N,X)$ is a $P_\theta$--risk process by (b), that $S$ is a $P_\theta$--CRP$(\mathbf{K}(\theta),P_{X_1})$ with $P_{X_1}=(P_\theta)_{X_1}$, hence taking into account the fact that $P$ satisfies conditions (a1), (a2) by (c), we can apply Proposition \ref{crp} in order to get the conclusion of (d). Thus, assertion (ii) follows, completing the whole proof. 
\end{proof}

\begin{rem}\label{remcon} 
\normalfont
 Due to \cite{mt3}, Lemma 3.1, we get $\F_{\infty}^{S}=\F_{\infty}^{(W,X)}$, implying together with Proposition \ref{prop1} that $\vS=\F_\infty^{(W,X,\vT)}=\F_\infty$.
\end{rem}

\begin{symb}\label{symb3} 
For given $\rho\in\mathfrak M^k(D)$, let  $\theta\in D$ and let ${\bf K}(\theta)$ and ${\bf \Lambda}(\ttheta)$ be probability distributions on $\B(\vY)$.  For any $n\in\N_0$ the class of all likelihood ratios $g_n:=g_{\rho,n}:\vY^{n+1}\times D\longrightarrow\vY$ defined by means of 
$$
g_n(w_1,\ldots,w_n,t,\theta):=\Big[\prod_{j=1}^{n}\frac{d{\bf{\Lambda}}(\ttheta)}{d{\bf{K}}(\theta)}(w_j)\Big]\cdot\frac{1-{\bf{\Lambda}}(\ttheta)(t-w)}{1-{\bf{K}}(\theta)(t-w)}
$$
for any $(w_1,\ldots,w_n,t,\theta)\in\vY^{n+1}\times D$, where $w:=\sum^n_{j=1}w_j$, will be denoted by $\G_{n,\rho}$. Notation  $\G_{\rho}$ stands for the set  $\{g=\{g_n\}_{n\in\N_0} : g_n\in\G_{n,\rho}\,\,\,\text{for any}\,\,\, n\in\N_0\}$ of all sequences of elements of $\G_{n,\rho}$.
\end{symb}

{\em Throughout what follows ${\bf K}(\theta)$, ${\bf\Lambda}(\ttheta)$  and $g\in\G_{\rho}$  are as in Notation \ref{symb3}, and $P$, $\vT$, $\{P_{\theta}\}_{\theta\in D}$ and $S$ are as in Proposition \ref{prop1}.}

\begin{prop}\label{thm2} 
For given $\rho\in\mathfrak M^k(D)$ let  $(\ga,\xi)\in\F_{P,h}\times\mathcal R_+(D)$. Then for every $0\leq u\leq t$ and for all $A\in\F_{u}$ condition 
$$
Q(A)=\int_{A}\xi(\vT)\cdot \left[\prod_{j=1}^{N_t}\,(h^{-1}\circ\ga\circ X_j)\right]\cdot g(W_1,\ldots,W_{N_t},t,\vT)\,dP
$$
determines a unique probability measure $Q\in\M_{S,{\bf \Lambda}(\Ttheta)}$.
\end{prop}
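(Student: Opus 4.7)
The plan is to reduce the statement to the non-mixed characterization of \cite{mt3}, Theorem 3.1, by working fiberwise in $\theta$ and then ``gluing'' via the rcp $\{P_\theta\}_{\theta\in D}$ provided by Proposition \ref{prop1}. Write $h^{-1}\circ\gamma=:f$ and note $\E_P[f(X_1)]=1$ by $\gamma\in\F_{P,h}$; also $\E_P[\xi(\vT)]=1$ by $\xi\in\mathcal{R}_+(D)$. For each $\theta\in D$ fix $g_n\in\G_{n,\rho}$ as in the statement, so that the predictable factor is
\[
\wt M_t^{(\ga)}(\theta)=\Bigl[\prod_{j=1}^{N_t}f(X_j)\Bigr]\cdot g_{N_t}(W_1,\ldots,W_{N_t},t,\theta).
\]
By Proposition \ref{prop1}(ii), under each $P_\theta$ the pair $(N,X)$ is a $P_\theta$-risk process with $N$ a $P_\theta$-RP$({\bf K}(\theta))$; hence \cite{mt3}, Theorem 3.1 (the non-mixed converse), applies with the pair $(\gamma,g)$, yielding a unique probability measure $Q_\theta$ on $\vS$ progressively equivalent to $P_\theta$, under which $S$ is a $Q_\theta$-CRP$({\bf\Lambda}(\rho(\theta)),(Q_\theta)_{X_1})$, and with
\begin{equation}
Q_\theta(A)=\int_A \wt M^{(\ga)}_t(\theta)\,dP_\theta\qquad\text{for all}\;0\le u\le t\;\text{and}\;A\in\F_u.
\label{planA}
\end{equation}

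The second step is to produce $Q$ from the family $\{Q_\theta\}_{\theta\in D}$. I would first verify that $\theta\longmapsto Q_\theta(A)$ is $\mf B(D)$-measurable for every $A\in\vS$: for $A\in\F_u$ this follows from \eqref{planA} since $\wt M_t^{(\ga)}(\theta)$ depends measurably on $\theta$ (the map $(w,\theta)\mapsto g_n(w,t,\theta)$ is $\B(\vY)^{n+1}\otimes\B(D)$-measurable by the assumption on $\theta\mapsto {\bf K}(\theta)(B)$ and the analogous assumption on ${\bf\Lambda}(\rho(\cdot))$), together with Fubini applied to the product rcp structure of Proposition \ref{prop1}; then a monotone class argument extends measurability from $\bigcup_u\F_u$ to all of $\vS=\F_\infty$ (Remark \ref{remcon}). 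Having measurability, I set
\[
Q(E):=\int_D \xi(\theta)\cdot Q_\theta(E)\,P_\vT(d\theta)\qquad(E\in\vS),
\]
which is a probability measure because $\E_{P_\vT}[\xi]=1$ and each $Q_\theta$ is a probability measure.

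The third step is to identify the structure of $Q$. Because $Q_\theta$ is supported on $\vT^{-1}[\{\theta\}]$ (inherited from $P_\theta$ via consistency), the family $\{Q_\theta\}_{\theta\in D}$ is a rcp of $Q$ over $Q_\vT$ consistent with $\vT$, and $Q_\vT=\xi\cdot P_\vT$, so $Q_\vT\sim P_\vT$. For a fixed $\theta$ outside a $P_\vT$-null set, $S$ is a $Q_\theta$-CRP$({\bf\Lambda}(\rho(\theta)),(Q_\theta)_{X_1})$; invoking Proposition \ref{crp} in the reverse direction (together with the fact that conditions (a1), (a2) under $Q$ follow from the per-fiber structure exactly as in step (c) of the proof of Proposition \ref{prop1}, since $\xi$ depends only on $\vT$) yields that $S$ is a $Q$-CMRP$({\bf\Lambda}(\rho(\vT)),Q_{X_1})$. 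Moreover, integrating \eqref{planA} against $\xi(\theta)P_\vT(d\theta)$ and using the disintegration identity $\int_D \E_{P_\theta}[\chi_A\cdot\wt M_t^{(\ga)}(\theta)]\,\xi(\theta)P_\vT(d\theta)=\int_A \xi(\vT)\cdot\wt M_t^{(\ga)}(\vT)\,dP$ (step (a) of the proof of Proposition \ref{thm1}) gives the stated integral formula for $Q(A)$, so $Q\in\M_{S,{\bf\Lambda}(\Ttheta)}$.

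Finally, uniqueness is obtained by observing that the formula prescribes $Q$ on each $\F_u$, and $\bigcup_{u\ge 0}\F_u$ generates $\F_\infty=\vS$ (Remark \ref{remcon}); a standard Dynkin argument concludes. The main obstacle I expect is the bookkeeping of exceptional $P_\vT$-null sets when passing from the per-$\theta$ statement \eqref{planA} to a single formula valid $P$-a.s., and in particular verifying the joint measurability of $(\omega,\theta)\mapsto\wt M_t^{(\ga)}(\theta)(\omega)$ cleanly enough for Fubini to apply; once that is arranged, the rest reduces to repeated use of the rcp identity (d2) and Dynkin's lemma on the generating family $\G_u$ used in the proof of Proposition \ref{thm1}.
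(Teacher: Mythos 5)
Your proof is essentially correct but takes a genuinely different route from the paper's. The paper does not work fiberwise and integrate: it defines the three ``building-block'' measures $\wc\mu:=\xi\cdot P_\vT$ on $\B(D)$, $\wc Q_n(\theta)=\mathbf{\Lambda}(\rho(\theta))$ on $\B(\vY)$ (via the likelihood ratio against $\mathbf K(\theta)$), and $\wc R:=(h^{-1}\circ\gamma)\cdot P_{X_1}$ on $\B(\vY)$, feeds them into the canonical construction of Proposition \ref{prop1}, and thereby obtains a probability measure $\wc Q$ together with a rcp $\{\wc Q_\theta\}$ for which (a1), (a2), the CMRP property, and the rcp structure all hold \emph{by construction}. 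It then checks $\wc Q_\vT\sim P_\vT$, $\wc Q_{X_1}\sim P_{X_1}$, $\wc Q_{W_1}\sim P_{W_1}$ and invokes Proposition \ref{thm1}, (ii) $\Rightarrow$ (iv), to conclude that $\wc Q$ satisfies the displayed formula; uniqueness then follows because any $Q$ satisfying the formula agrees with $\wc Q$ on the algebra $\bigcup_u\F_u$ generating $\vS=\F_\infty$. Your approach instead applies the non-mixed converse of \cite{mt3}, Theorem 3.1 for each fixed $\theta$ to manufacture $Q_\theta$, and then reassembles $Q:=\int_D\xi(\theta)Q_\theta(\cdot)\,P_\vT(d\theta)$. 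This works, but you then have to verify by hand what Proposition \ref{prop1} gives the paper for free: that $\theta\mapsto Q_\theta(\cdot)$ is measurable, that $\{Q_\theta\}$ really is a rcp of $Q$ over $Q_\vT$ consistent with $\vT$, that (a1) and (a2) hold under $Q$, and that $\gamma\in\F_{P_\theta,h}$ (which is fine here only because $(P_\theta)_{X_1}=P_{X_1}$ is $\theta$-independent on the canonical space). These are precisely the places where your argument is sketchiest, and they are also where the paper economizes by routing everything through Proposition \ref{prop1} and Proposition \ref{thm1} rather than integrating fiberwise. Both routes are legitimate; the paper's is slicker because the structural properties of $Q$ are built in, whereas yours is perhaps more transparent about where the per-$\theta$ renewal characterization of \cite{mt3} is doing the work.
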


\begin{proof} Let $(\ga,\xi)\in\F_{P,h}\times\mathcal R_+(D)$ and fix on arbitrary $t\geq 0$ and $n\in\N$. For any $\theta\in D$ define the set-functions $\wc\mu:\B(D)\longrightarrow\R$ and $\wc Q_n(\theta):\B(\vY)\longrightarrow \R$, by means of
	\[
	\wc\mu(F):=\E_P[\chi_{\vT^{-1}[F]}\cdot\xi(\vT)]\quad\text{ for any }\,\, F\in\B(D)
	\] 
	and 
	\[
	\wc Q_n(\theta)(B_1):=\E_{P_\theta}\left[\chi_{W_1^{-1}[B_1]}\cdot\left(\frac{d{\bf \Lambda} (\ttheta)}{d{\bf K} (\theta)}\circ W_1\right)\right] \quad\text{ for any }\,\, B_1\in\B(\vY),
	\]
	respectively. Also, consider the set-function $\wc R:\B(\vY)\longrightarrow \R$, defined by means of
\[
\wc R(B_2):=\E_{P}[\chi_{X^{-1}_{1}[B_2]} \cdot (h^{-1}\circ\ga\circ X_1)]\quad\text{ for any }\,\, B_2\in\B(\vY).
\]

 Clearly $\wc\mu$ and $\wc{Q}_n(\theta)$ are probability measures on $\B(D)$ and $\mf{B}(\vY)$, respectively, while $\wc{R}$ is a probability measure  by \cite{mt3}, Lemma 2.3(a). To show that $\wc{Q}_n(\theta)$ satisfies condition  
\begin{equation}
\wc{Q}_n(\theta)(B)=\mathbf{\Lambda}(\rho(\theta))(B)
\label{equ3}
\end{equation}
for any $B\in\mf{B}(\vY)$, put $\mathcal{C}^W:=\{(0,w]: w\in\vY\}$. Clearly, $\mathcal{C}^W$ is a generator of $\mf{B}(\vY)$, which is closed under finite intersections and satisfies condition \eqref{equ3}. Denoting by $\mathcal{D}^W$ the family of all elements of $\mf{B}(\vY)$ satisfying condition \eqref{equ3}, it can be easily shown that it is a Dynkin class containing $\mathcal{C}^W$; hence $\mathcal{D}^W=\mf{B}(\vY)$, i.e. condition \eqref{equ3} holds for any $B\in\mf{B}(\vY)$.

Thus, applying Proposition \ref{prop1} for $\wc\mu$, $\wc Q_n(\theta)$ and $\wc R$ in the place of $\mu$, $P_n(\theta)$ and $R$, respectively, we can construct a family $\{\wc Q_\theta\}_{\theta\in D}$ of probability measures on  $\vS$ defined by means of $\wc Q_\theta:={\bf\Lambda}(\ttheta)_\N\otimes\wc R_\N\otimes\delta_\theta$, a probability measure $\wc Q$ on $\vS$ satisfying conditions (a1) and (a2)  and  such that $\{\wc Q_\theta\}_{\theta\in D}$ is a rcp of $\wc Q$ over $\wc Q_\vT=\wc\mu$ consistent with $\vT$ and $S$ is a $\wc Q$--CMRP$({\bf\Lambda}(\Ttheta), \wc Q_{X_1})$ with $\wc Q_{X_1}=\wc R$. The latter, together with the definitions of $\wc\mu$, $\wc R$ and $\wc Q_n(\theta)$,  implies that $\wc Q_\vT\sim P_\vT$, $\wc Q_{X_1}\sim P_{X_1}$ and $(\wc Q_\theta)_{W_1}\sim (P_\theta)_{W_1}$ for any $\theta\in D$. But since  $(\wc Q_\theta)_{W_1}\sim (P_\theta)_{W_1}$ for any $\theta\in D$ and $\{P_\theta\}_{\theta\in D}$ and $\{\wc Q_\theta\}_{\theta\in D}$ are rcps of $P$ over $P_\vT$ and of $\wc Q$ over $\wc Q_\vT$, respectively, consistent with $\vT$, it follows easily that $\wc Q_{W_1}\sim P_{W_1}$.   Applying now Proposition \ref{thm1}, we get $\wc Q\uph\F_t\sim P\uph\F_t$, implying that $\wc Q\in\M_{S,{\bf \Lambda}(\Ttheta)}$,  or equivalently  
$$
\wc Q(A)=\int_{A}\xi(\vT)\cdot \prod_{j=1}^{N_t}\,(h^{-1}\circ\ga\circ X_j)\cdot g(W_1,\ldots,W_{N_t},t,\vT)\,dP
$$
for all $0\leq u\leq t$ and $A\in\F_{u}$. Thus $Q\uph\F_u=\wc{Q}\uph\F_u$ for all $u\in\R_+$; hence $Q\uph\wc\vS=\wc{Q}\uph\wc\vS$, where $\wc{\vS}:=\bigcup_{u\in\R_+} \F_u$, implying that $Q$ is $\sigma$--additive on $\wc\vS$  and that $\wc Q$ is the unique extension of $Q$ on $\vS=\sigma(\wc\vS)$, completing in this way the proof.
\end{proof} 

The following result is the desired characterization of progressively equivalent measures that preserve the   structure of a compound mixed renewal process.

\begin{thm}\label{thm!} 
	Let be given an arbitrary $\rho\in\mathfrak M^k(D)$. Then  the following hold true:
\begin{enumerate}
\item
for any  $Q\in\M_{S,{\bf \Lambda}(\Ttheta)}$ there exist an essentially unique pair $(\ga,\xi)\in\F_{P,h}\times\mathcal R_+(D)$, where $\xi$ is a Radon-Nikod\'{y}m derivative of $Q_\vT$ with respect to $P_\vT$, satisfying condition \eqref{mart};
\item
conversely, for  any pair $(\ga,\xi)\in\F_{P,h}\times\mathcal R_+(D)$ there exists  a unique probability measure $Q\in\M_{S,{\bf \Lambda}(\Ttheta)}$ determined by condition \eqref{mart}; 
\item in both cases (i) and (ii), there exists an essentially unique rcp $\{Q_{\theta}\}_{\theta\in D}$ of $Q$ over $Q_\vT$  consistent with $\vT$ and a $P_\vT$--null set $L_{\ast\ast}\in\B(D)$, satisfying for any $\theta\notin L_{\ast\ast}$ conditions $Q_\theta\in{\M}_{S,{\bf \Lambda}(\ttheta)}$ and \eqref{rcp2}. 
\end{enumerate}
\end{thm}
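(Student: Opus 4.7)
The plan is to assemble the theorem directly from Propositions \ref{thm1}, \ref{thm2} and \ref{crp}, together with the canonical construction of Proposition \ref{prop1}, rather than redoing any of the machinery; the three items essentially repackage those earlier results.

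For assertion (i), given $Q\in\M_{S,{\bf\Lambda}(\Ttheta)}$, by definition $Q$ is progressively equivalent to $P$, satisfies (a1), (a2), and turns $S$ into a $Q$--CMRP$({\bf\Lambda}(\Ttheta),Q_{X_1})$. Thus the hypotheses of Proposition \ref{thm1} hold, and the implication (i)$\Longrightarrow$(iv) there yields a pair $(\ga,\xi)\in\F_{P,h}\times\mathcal R_+(D)$ satisfying condition \eqref{mart} with $\xi$ a Radon-Nikod\'ym derivative of $Q_\vT$ with respect to $P_\vT$. Essential uniqueness of $\xi$ is just uniqueness of the Radon-Nikod\'ym derivative on $(D,\B(D),P_\vT)$; essential uniqueness of $\ga$ follows from the uniqueness clause in \cite{mt3}, Lemma 2.2(a), applied to $Q_{X_1}$ with respect to $P_{X_1}$ (both equivalent by step (a) in the proof of Lemma \ref{lem2}).

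For assertion (ii), this is exactly the content of Proposition \ref{thm2}, which already establishes that for any $(\ga,\xi)\in\F_{P,h}\times\mathcal R_+(D)$ the prescription \eqref{mart} determines a unique probability measure $Q$ lying in $\M_{S,{\bf\Lambda}(\Ttheta)}$; nothing further is needed here.

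The substantive bookkeeping is (iii), and this is where I expect the only mild obstacle, namely producing and handling the rcp on the canonical side. Since we work on the canonical space $(\vO,\vS)$ of Proposition \ref{prop1}, where $\vO=\vY^{\N}\times\vY^{\N}\times D$ is Polish and $\vS=\B(\vO)$ is countably generated, a rcp $\{Q_\theta\}_{\theta\in D}$ of $Q$ over $Q_\vT$ consistent with $\vT=\pi_D$ exists by \cite{fa}, Theorem 6, and is essentially unique by standard arguments (any two such rcps agree for $Q_\vT$--a.e.\ $\theta$ on a countable generator of $\vS$ closed under finite intersections, hence $Q_\vT$--a.e.\ on $\vS$ by a Dynkin-class argument; and $Q_\vT\sim P_\vT$ by Proposition \ref{thm1}(ii), so this coincides with $P_\vT$--essential uniqueness). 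Since $Q\in\M_{S,{\bf\Lambda}(\Ttheta)}$, applying Proposition \ref{crp} to $Q$ yields a $Q_\vT$--null set (hence $P_\vT$--null) $L_Q\in\B(D)$ such that $S$ is a $Q_\theta$--CRP$({\bf\Lambda}(\ttheta),(Q_\theta)_{X_1})$ with $(Q_\theta)_{X_1}=Q_{X_1}$ for every $\theta\notin L_Q$. Finally, the implication (iv)$\Longrightarrow$(iii) of Proposition \ref{thm1} (which we may apply now that (iv) is in hand, either by (i) or by construction via (ii)) produces a $P_\vT$--null set $L_{\ast\ast}^{0}\in\B(D)$ on which \eqref{rcp2} holds and on which $\wt M^{(\ga)}(\theta)$ is an a.s.\ positive martingale in $\mathcal L^1(P_\theta)$ with unit expectation. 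Setting $L_{\ast\ast}:=L_Q\cup L_{\ast\ast}^{0}\in\B(D)$ gives a single $P_\vT$--null set outside of which both $Q_\theta\in\M_{S,{\bf\Lambda}(\ttheta)}$ and condition \eqref{rcp2} hold, completing the proof. The only delicate point is making sure the essentially unique rcp chosen is compatible with the $P_\vT$--null set provided by Proposition \ref{thm1} (iv)$\Rightarrow$(iii), which was established pointwise; but this is immediate because any two rcps agree outside a $Q_\vT$--null, hence $P_\vT$--null, set which can be absorbed into $L_{\ast\ast}$.
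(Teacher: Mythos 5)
Your proof is correct and follows essentially the same route as the paper's: (i) and (ii) are read off from Propositions~\ref{thm1} and~\ref{thm2}, and (iii) combines existence of an rcp on the Polish canonical space with the implication (iv)$\Rightarrow$(iii) of Proposition~\ref{thm1}. Your explicit invocation of Proposition~\ref{crp} to obtain $Q_\theta\in\M_{S,{\bf\Lambda}(\ttheta)}$ (which the paper leaves implicit under ``immediate consequence of Proposition~\ref{thm1}'', where it is actually buried in step (b) of the proof of Lemma~\ref{lem2}) is a helpful clarification but does not change the argument.
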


\begin{proof}  Assertions (i) and (ii) follow by Propositions \ref{thm1} and \ref{thm2}.\smallskip

Ad (iii): Since $\vO$ is a Polish space, according to the Remark following Definition \ref{rcp}, there always exists a rcp $\{Q_\theta\}_{\theta\in D}$ of $Q$ over $Q_\vT$ consistent with $\vT$; hence assertion
(iii) is an immediate consequence of Proposition \ref{thm1}.\end{proof}

\begin{rem}
\label{rem2}
\normalfont
 Theorem 3.9 of \cite{mt3} is an immediate consequence of Theorem \ref{thm!} if the distribution of $\vT$ under $P$ is degenerate at some $\theta_0\in D$.\smallskip

In fact,   assume that $\mu$ is a probability measure on $\B(D)$ such that $\mu:=\delta_{\theta_0}$ and for a fixed $\theta_0\in D$. According to Remark \ref{remcon},  we can construct a probability space $(\vO,\vS,P)$ such that $P_\vT(\{\theta_0\})=1$  and  $P\in\M_{S,{\bf K}(\vT)}$. Clearly, if we consider a probability measure $Q\in\M_{S,{\bf \Lambda}(\Ttheta)}$, then according to Proposition \ref{thm1}(ii) we obtain $Q_\vT\sim P_\vT$, implying  $Q_\vT(\{\theta_0\})=1$; hence without loss of generality we may assume that $\vT(\omega)=\theta_0$ for any $\omega\in\vO$. Consequently, we obtain  $\sigma(\vT)=\{\emptyset,\vO\}$ and  $\F_t=\F^S_t$ for any $t\in\R_+$; hence $\F_\infty=\F^S_\infty$. Moreover, note that in this case condition  \eqref{mart} is reduced to condition \eqref{rcp2}. Applying now Theorem \ref{thm!} for a degenerate $P_\vT$ we obtain Theorem 3.9 from \cite{mt3}. 
 \end{rem}

 \begin{symb}\label{mppnot} 
\normalfont
Denote by $\F_{P,\vT}:=\F_{P,\vT,X_1}$  the class of all real-valued $\B(\vY\times D)$--measurable functions $\be$ on $\vY\times D$ defined by means of $\be(x,\theta):=\ga(x)+\al(\theta)$ for any $x\in\vY$ and $\theta\in D$, where $\ga\in\F_{P,\ln}$ and $\al\in\mathfrak M(D)$.  Moreover, put $S_t^{(\be)}(\vT):=\sum^{N_t}_{j=1}\be(X_j,\vT)$ for any  $\be\in\F_{P,\vT}$.
 \end{symb} 

\begin{cor}\label{cor1} 
 If $\E_P[W_1|\vT]\in\vY\;\;\; P\uph\sigma(\vT)$--a.s. the following hold true:
\begin{enumerate}
\item for any pair $(\rho,Q)\in\mathfrak M_+(D)\times\M_{S,{\bf Exp}(\Ttheta)}$   there exists an essentially unique pair $(\be,\xi)\in\F_{P,\vT}\times\mathcal R_+(D)$, where $\xi$ is a Radon-Nikod\'{y}m derivative of $Q_\vT$ with respect to $P_\vT$, such that
\begin{equation}
\tag{$\ast$}
\ga =\ln f\quad\text{and}\quad \al(\vT)=\ln\rho(\vT)+\ln\E_{P}[W_1\mid\vT]\;\;\;P\uph\sigma(\vT)\mbox{a.s.},
\label{ast}
\end{equation} 
where $f$ is a $P_{X_1}$--a.s. positive Radon-Nikod\'{y}m derivative of $Q_{X_1}$ with respect to $P_{X_1}$, and 
\begin{equation}
\label{martPP}
\tag{$RPM_\xi$}
Q(A)=\int_A  M^{(\be)}_t(\vT)\,dP\quad\text{for all}\,\,\,0\leq u\leq t\,\,\text{and}\,\, A\in\F_{u},
\end{equation}
where  
 \[
 M^{(\be)}_t(\vT):= \xi(\vT)\cdot  \frac{e^{S_t^{(\be)}(\vT)- \Ttheta\cdot (t-T_{N_t})}\cdot(\Ttheta\cdot \E_P[W_1\mid\vT])^{-N_t}}{  1-{\bf{K}}(\vT) (t-T_{N_t})}\cdot \prod_{j=1}^{N_t}\frac{d{\bf Exp}(\Ttheta)}{d{\bf K}(\vT)}(W_j) ;
 \]

\item conversely, for any pair function $(\be,\xi)\in\F_{P,\vT}\times\mathcal R_+(D)$  there exist a unique pair $(\rho,Q)\in\mathfrak M_+(D)\times\M_{S,{\bf Exp}(\Ttheta)}$  determined by  \eqref{martPP} and  satisfying  condition  \eqref{ast}.

\item  
 in both cases (i) and (ii), there exists an essentially unique rcp $\{Q_{\theta}\}_{\theta\in D}$ of $Q$ over $Q_\vT$  consistent with $\vT$ and a $P_\vT$--null set $\wt L_{\ast\ast}\in\B(D)$, satisfying for any $\theta\notin \wt L_{\ast\ast}$ conditions $Q_\theta\in{\M}_{S,{\bf Exp}(\ttheta)}$,
\begin{equation}
\ga =\ln f\quad\text{and}\quad \ttheta=\frac{e^{\al(\theta)}}{\E_{P_\theta}[W_1]},
\tag{$\wt{\ast}$}
\label{*}
\end{equation}
 and
\begin{equation}
\tag{$RPM_\theta$}
Q_\theta(A)=\int_{A} \wt M^{(\be)}_{t}(\theta)\,dP_\theta\quad\text{for all}\,\,\,0\leq u\leq t\,\,\text{and}\,\, A\in\F_{u},
\label{rcp3}
\end{equation} 
where 
\[
\wt{M}^{(\be)}_t(\theta):= \frac{e^{S_t^{(\be)}(\theta)- \ttheta\cdot (t-T_{N_t})}\cdot(\ttheta\cdot \E_{P_\theta}[W_1])^{-N_t}}{  1-{\bf{K}}(\theta) (t-T_{N_t})}\cdot \prod_{j=1}^{N_t}\frac{d{\bf Exp}(\ttheta)}{d{\bf K}(\theta)}(W_j).
\]
 \end{enumerate}
 \end{cor}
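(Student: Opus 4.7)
The approach is to derive this corollary as a direct specialization of Theorem~\ref{thm!} with the choice $\mathbf{\Lambda}(\rho(\vT)):=\mathbf{Exp}(\Ttheta)$ and $h:=\ln$ (so that $h^{-1}=\exp$), followed by a routine algebraic rewriting of the martingale density into compound (mixed) Poisson form. The role of the hypothesis $\E_P[W_1\mid\vT]\in\vY$ $P\upharpoonright\sigma(\vT)$--a.s. is to guarantee that $\ln\E_P[W_1\mid\vT]$ is a finite $\mathfrak{B}(D)$--measurable function, so that the bijective correspondence $(\ga,\rho)\longleftrightarrow(\ga,\al)$ between the two parametrizations is well-defined.

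For (i), I would first apply Theorem~\ref{thm!}(i) to obtain an essentially unique pair $(\ga,\xi)\in\F_{P,\ln}\times\mathcal R_+(D)$ satisfying \eqref{mart}; then set $f:=\exp\circ\ga$, which gives $\ga=\ln f$ with $f$ a $P_{X_1}$--a.s. positive Radon-Nikod\'{y}m derivative of $Q_{X_1}$ w.r.t. $P_{X_1}$ (this is the content of $\ga\in\F_{P,\ln}$) and yields the first half of \eqref{ast}. Next define $\al\in\mathfrak M(D)$ by $\al(\vT):=\ln\Ttheta+\ln\E_P[W_1\mid\vT]$ $P\upharpoonright\sigma(\vT)$--a.s., and $\be:=\ga+\al\in\F_{P,\vT}$. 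Substituting $h^{-1}=\exp$ and $1-\mathbf{Exp}(\Ttheta)(s)=e^{-\Ttheta\cdot s}$ into the explicit formula for $\wt M^{(\ga)}_t(\vT)$ from Lemma~\ref{lem2} and using
\[
e^{\sum_{j=1}^{N_t}\ga(X_j)}\cdot e^{N_t\al(\vT)}=e^{S^{(\be)}_t(\vT)},\qquad e^{\al(\vT)}=\Ttheta\cdot\E_P[W_1\mid\vT],
\]
one factors out $(\Ttheta\cdot\E_P[W_1\mid\vT])^{-N_t}$ and rewrites $M^{(\ga)}_t(\vT)=\xi(\vT)\cdot\wt M^{(\ga)}_t(\vT)$ as the stated $M^{(\be)}_t(\vT)$ in \eqref{martPP}. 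Essential uniqueness of $(\be,\xi)$ is inherited from that of $(\ga,\xi)$ together with the explicit definition of $\al$; uniqueness of the decomposition $\be=\ga+\al$ itself comes from the normalization $\E_P[e^{\ga(X_1)}]=1$. For (ii), I would reverse the construction: given $(\be,\xi)$ with the canonical decomposition $\be(x,\vT)=\ga(x)+\al(\vT)$ of Notation~\ref{mppnot}, define $\Ttheta:=e^{\al(\vT)}/\E_P[W_1\mid\vT]\in\mathfrak M_+(D)$ (well-defined and strictly positive by the hypothesis), so that $\ga\in\F_{P,\ln}$ and $\xi\in\mathcal R_+(D)$; then Theorem~\ref{thm!}(ii) produces a unique $Q\in\M_{S,\mathbf{Exp}(\Ttheta)}$ determined by \eqref{mart}, and the same algebraic identity identifies \eqref{mart} with \eqref{martPP}.

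For (iii), I would specialize Theorem~\ref{thm!}(iii) with the same choice of $\mathbf{\Lambda}$ and $h$: the essentially unique rcp $\{Q_\theta\}_{\theta\in D}$ of $Q$ over $Q_\vT$ consistent with $\vT$ and a $P_\vT$--null set $\wt L_{\ast\ast}\supseteq L_{\ast\ast}$ are immediate, with $Q_\theta\in\M_{S,\mathbf{Exp}(\ttheta)}$ and \eqref{rcp2} valid on $\wt L_{\ast\ast}^c$. The fiberwise identity $\ttheta=e^{\al(\theta)}/\E_{P_\theta}[W_1]$ is then obtained from the disintegration formula $\E_P[W_1\mid\vT](\omega)=\E_{P_{\vT(\omega)}}[W_1]$ $P$--a.s. (a direct consequence of the consistency of $\{P_\theta\}_{\theta\in D}$ with $\vT$ together with \cite{lm1v}, Proposition~3.8), after absorbing into $\wt L_{\ast\ast}$ the exceptional $P_\vT$--null set on which this identity or $\E_{P_\theta}[W_1]\in\vY$ fails. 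The same algebraic substitution as in (i) then converts $\wt M^{(\ga)}_t(\theta)$ into the stated $\wt M^{(\be)}_t(\theta)$ of \eqref{rcp3}. I do not expect any conceptual obstacle beyond Theorem~\ref{thm!}; the main care is the bookkeeping of the $P_\vT$--null sets supporting the disintegration identity, the finiteness of $\E_{P_\theta}[W_1]$, and the Radon-Nikod\'{y}m densities, all of which must be absorbed into a single $P_\vT$--null set $\wt L_{\ast\ast}$.
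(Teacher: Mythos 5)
Your proposal is correct and follows essentially the same route as the paper's own proof: specialize Theorem~\ref{thm!} with $h=\ln$ and $\mathbf{\Lambda}(\rho(\vT))=\mathbf{Exp}(\rho(\vT))$, define $\al(\vT)=\ln\rho(\vT)+\ln\E_P[W_1\mid\vT]$ (respectively invert it in (ii)), perform the algebraic cancellation $e^{S^{(\be)}_t(\vT)}\cdot(\rho(\vT)\E_P[W_1\mid\vT])^{-N_t}=e^{\sum_j\ga(X_j)}$, and for (iii) pass from $\E_P[W_1\mid\vT]$ to $\E_{P_\theta}[W_1]$ via the rcp-disintegration identity (the paper cites \cite{lm1v}, Lemma~3.5; you cite Proposition~3.8, but this is an immaterial variant of the same disintegration fact), absorbing all exceptional sets into a single $P_\vT$--null set $\wt L_{\ast\ast}$.
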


\begin{proof} 
Fix on arbitrary $t\in\R_+$.\smallskip

Ad (i): If (i) holds then according to Theorem \ref{thm!}(i), there exist an essentially unique pair $(\ga,\xi)\in \F_{P,\ln}\times\mathcal R_+(D)$, with  $\ga=\ln f$, where $f$ is a Radon-Nikod\'{y}m derivative of $Q_{X_1}$ with respect to $P_{X_1}$, and $\xi$ is a  Radon-Nikod\'{y}m derivative  of $Q_\vT$ with respect to $P_\vT$,  such that
\begin{equation}
Q(A)=\int_A \xi(\vT)\cdot \frac{e^{\sum_{j=1}^{N_t} \ga(X_j)-\Ttheta\cdot(t-T_{N_t})}}{1-{\bf{K}}(\vT)(t-T_{N_t})}\cdot \prod_{j=1}^{N_t}\frac{d{\bf Exp}(\Ttheta)}{d{\bf K}(\vT)}(W_j)\,dP
\label{1111b}
\end{equation}
 for all $0\leq u\leq t$ and $A\in\F_u$. Putting  $\al(\vT)=\ln\rho(\vT)+\ln\E_{P}[W_1\mid\vT]$  $P\uph\sigma(\vT)$   and  $\be=\ga+\al$, we get that  $\be\in\F_{P,\vT}$ and   condition \eqref{ast} is valid. The latter, together with condition \eqref{1111b} implies  \eqref{martPP}.\smallskip
 
 Ad (ii): Consider the pair $(\be,\xi)\in\F_{P,\vT}\times\mathcal R_+(D)$ and  define $\rho\in\mathfrak M_+(D)$ by means of $\rho(\vT):=e^{\al(\vT)}/\E_{P}[W_1\mid\vT]$  $P\uph\sigma(\vT)$.   Then, condition \eqref{ast} is satisfied, while applying Theorem \ref{thm!}(ii) for $\ga=\be-\al$ we get  a unique probability measure $Q\in\M_{S,{\bf Exp}(\Ttheta)}$ satisfying condition \eqref{mart}  or equivalently condition \eqref{martPP}. \smallskip

Ad (iii): Since $\vO$ is a Polish space, according to the Remark following Definition \ref{rcp}, there always exists a rcp $\{Q_\theta\}_{\theta\in D}$ of $Q$ over $Q_\vT$ consistent with $\vT$. By \cite{lm1v}, Lemma 3.5,  there exists a $P_\vT$--null set $U_1\in\mf{B}(D)$ such that the second equality of condition \eqref{ast} is equivalent to $\al(\theta)=\ln\rho(\theta)+\ln\E_{P_\theta}[W_1]$ for any $\theta\notin{U}_1$. The latter along with Proposition \ref{thm1}, implies that there exists a $P_\vT$--null set $\wt L_{\ast\ast}:=L_{\ast\ast}\cup{U}_1\in\mf{B}(D)$ such that for any $\theta\notin{\wt L_{\ast\ast}}$ conditions  \eqref{*} and \eqref{rcp3} hold true. Clearly, condition \eqref{rcp3} implies that $Q_\theta\in{\M}_{S,{\bf Exp}(\ttheta)}$  for any $\theta\notin \wt L_{\ast\ast}$.
\end{proof}

\begin{rems}
\label{rem3} 
\normalfont
 {\bf (a)} For the special case $P\in\M_{S,{\bf Exp}(\vT)}$, Corollary \ref{cor1} yields the main result of Lyberopoulos \& Macheras \cite{lm3}, Theorem 4.3.\smallskip
 
  {\bf (b)} Fix on  $\ell\in\{1,2\} $. For given $\rho\in\mathfrak M^k(D)$,  define the classes 
\[
\M^\ell_{S,{\bf \Lambda}(\Ttheta)}:=\{Q\in\M_{S,{\bf \Lambda}(\Ttheta)}: \E_Q[X_1^\ell]<\infty\}
\]
and 
\[
\F^\ell_{P,h}:=\{\ga\in\F_{P,h}: \E_P[X_1^\ell\cdot (h^{-1}\circ\ga\circ X_1)]<\infty\}.
\] 
 
It can be easily seen that Theorem \ref{thm!} remains true, if
we replace the classes $\M_{S,{\bf \Lambda}(\Ttheta)}$ and  $\F_{P,h}$   by their subclasses $\M^\ell_{S,{\bf \Lambda}(\Ttheta)}$ and  $\F^\ell_{P,h}$, respectively. Consequently, Corollary \ref{cor1} remains true  if we replace d 
$\F_{P,\vT}$  by  it subclass
\[
\F^\ell_{P,\vT}:=\{\be=\ga+\al : \ga\in\F^\ell_{P,\ln}\,\,\text{and}\,\,\al\in\mathfrak M(D)\}.
\]

  	{\bf (c)} Assuming in Proposition \ref{prop1} that $\int_\vY x^\ell\,R(dx)<\infty$ for $\ell\in\{1,2\}$,  we get $\E_P[X^\ell_1]<\infty$, implying that $P\in\M^\ell_{S,{\bf K}(\vT)}$.
 	\smallskip

 	  {\bf (d)} Note that \cite{lm1v}, Lemma 3.5, remains true without the assumption $g\in\mathcal{L}^1(P)$ but only with the assumption that the integral $\int g\, dP$ is defined in $\R\cup\{-\infty,+\infty\}$.
 \end{rems}

 In the next examples, applying Corollary \ref{cor1}, we show how starting from a given pair $(\be,\xi)\in\F_{P,\vT}\times\mathcal R_+(D)$  we can construct a unique pair $(\rho,Q)\in\mathfrak M_+(D)\times \M_{S,{\bf Exp}(\rho(\vT))}$, converting an arbitrary compound mixed renewal process $S$ into a compound mixed Poisson one.\smallskip

 \textit{Throughout what follows assume that $\E_P[W_1|\vT]\in\vY\;\; P\uph\sigma(\vT)$--a.s..}\smallskip

In our first example we show how to find a probability measure $Q$ such  that $S$ is converted into a compound P\'{o}lya-Lundberg process under $Q$.

\begin{ex}
\label{exPL} 
\normalfont
Take $D:=\vY$, and let $\vT$ be a positive real-valued random variable. Assume that  $P_\vT$ is absolutely continuous with respect to the Lebesgue measure $\lambda\uph\B([0,1])$, and denote by $g$ the corresponding probability density function of $\vT$. Define the  function $\xi\in\mathfrak{M}_+(\vY)$ by means of 
\[
\xi(\theta):=\frac{b^a\cdot\theta^{a-1}\cdot e^{-b\cdot \theta}}{\Gamma(a)\cdot g(\theta)}\,\,\text{for any}\,\,\theta\in \vY,
\]
where $a,b\in\vY$ are   constants. Clearly $\E_P[\xi(\vT)]=1$, implying that $\xi\in\mathcal R_+(\vY)$.  Consider the function $\be(x,\theta):=\ga(x)+ \ln\left(\theta\cdot\E_{P_\theta}[W_1]\right)$ for any $(x,\theta)\in\vY^2$, with $\ga\in\F_{P,\ln}$; hence $\be\in\F_{P,\vT}$.  Applying now Corollary \ref{cor1}(ii) we get that there exists a unique pair $(\rho,Q)\in\mathfrak M_+(\vY)\times \M_{S,{\bf Exp}(\Ttheta)}$ satisfying conditions \eqref{ast} and \eqref{martPP}. In particular, it follows from condition \eqref{ast} that $\rho=id_\vY\;\; P_\vT$--a.s., and that 
\[
Q_\vT(B)=\E_P[\chi_{\vT^{-1}(B)}\cdot\xi(\vT)]=\int_B \frac{b^{a}\cdot\theta^{a-1}}{\Gamma(a)}\cdot e^{-b\cdot \theta}\,\lambda(d\theta)\quad\text{for any}\,\,B\in\B(\vY);
\]
hence the random variable $\vT$ satisfy condition $Q_{\vT}={\bf Ga}(b,a)$, implying that $S$ is a compound P\'{o}lya-Lundberg process under $Q$.
\end{ex}

The following example shows how to choose a probability measure $Q$ under which $S$ becomes a compound Poisson-Lognormal process. Recall 	the Lognormal distribution with parameters $\mu\in\R$ and $\sigma^2\in\vY$ (written ${\bf LN}(\mu,\sigma^2)$ for short) i.e.
$$
{\bf LN}(\mu,\sigma^2)(B):=\int_B  \frac {1}{\sqrt {2\pi }\cdot \sigma \cdot x}\ e^{-\frac {(\ln x-\mu )^{2}}{2\cdot\sigma^{2}}}\,\lambda(dx)\;\;\text{for any}\;\; B\in\B(\vY).
$$

\begin{ex}
\normalfont
Take $D:=\R$, and let $\vT$ be a real-valued random variable. Assume that  $P_\vT$ is absolutely continuous with respect to the Lebesgue measure $\lambda\uph\B([0,1])$, and denote by $g$ the corresponding probability density function of $\vT$. Define the  function $\xi\in\mathfrak{M}_+(\R)$ by means of 
\[
\xi(\theta):=\frac{1}{\sqrt{2\pi}\cdot \sigma\cdot g(\theta)}\cdot e^{-\frac{1}{2\cdot\sigma^2}\cdot(\theta-\mu)^2}\quad\text{for any}\,\,\theta\in\R,
\]
where $\mu\in\R$ and $\sigma\in\vY$ are   constants. Clearly $\E_P[\xi(\vT)]=1$, implying that $\xi\in\mathcal R_+(\R)$.  Consider the function $\be(x,\theta):=\ga(x)+ \ln\left(e^\theta\cdot\E_{P_\theta}[W_1]\right)$ for any $(x,\theta)\in\vY\times\R$, with $\ga\in\F_{P,\ln}$; hence $\be\in\F_{P,\vT}$.  Applying now Corollary \ref{cor1}(ii) we get that there exists a unique pair $(\rho,Q)\in\mathfrak M_+(\vY)\times \M_{S,{\bf Exp}(\Ttheta)}$ satisfying conditions \eqref{ast} and \eqref{martPP}. In particular, it follows from condition \eqref{ast} that $\rho(\theta)=e^\theta$ for $P_\vT$--a.a $\theta\in\R$, and that 
\[
Q_\vT(B)=\E_P[\chi_{\vT^{-1}(B)}\cdot\xi(\vT)]=\int_B \frac{1}{\sqrt{2\pi}\cdot \sigma}\cdot e^{-\frac{1}{2\cdot\sigma^2}\cdot(\theta-\mu)^2}\,\lambda(d\theta)\quad\text{for any}\,\,B\in\B;
\]
hence the random variable $\vT$ satisfy condition $Q_{\vT}={\bf N}(\mu,\sigma^2)$, implying that  $Q_{\rho(\vT)}={\bf LN}(\mu,\sigma^2)$ and that $S$ is a compound Poisson-Lognormal process under $Q$.
\end{ex}

In our final example we show how to transform a compound mixed renewal process into a compound Poisson-Beta one under a change of measures. Recall the Beta distribution with parameters $a,b\in\vY$ (written ${\bf Be}(a,b)$ for short) i.e.
$$
{\bf Be}(a,b)(B):=\int_B \frac{\Gamma(a+b)}{\Gamma(a)\cdot\Gamma(b)}\cdot x^{a-1}\cdot(1-x)^{b-1}\,\lambda(dx)\;\;\text{for any}\;\; B\in\B((0,1)).
$$

\begin{ex}
\normalfont
Take $D:=\vY^2$, and let $\vT:=(\vT_1,\vT_2)$ be a $2$--dimensional random vector, where $\vT_1$ and $\vT_2$ are two positive real-valued random variables on $\vO$. Assume also that  $P_\vT$ is absolutely continuous with respect to the Lebesgue product measure $\lambda_2\uph\B([0,1]^2)$, and denote by $g:\vY^2\longrightarrow\vY$ the corresponding probability density function of $\vT$. Define the function $\xi\in\mathfrak{M}_+(\vY^2)$ by means of
$$
\xi(\theta):=\xi(\theta_1,\theta_2):=\frac{a^{b_1+b_2}\cdot\theta_1^{b_1-1}\cdot\theta_2^{b_2-1}}{\Gamma(b_1)\cdot\Gamma(b_2)\cdot g(\theta_1,\theta_2)}\cdot e^{-a(\theta_1+\theta_2)}\quad\text{for any}\,\,(\theta_1,\theta_2)\in\vY^2
$$
where $a,b_1,b_2\in\vY$ are   constants. Clearly $\E_P[\xi(\vT)]=1$, implying that $\xi\in\mathcal R_+(\vY^2)$. Consider the function $\be(x,\theta):=\ga(x)+ \ln\frac{\theta_1  \cdot \E_{P_\theta}[W_1]}{\theta_1+\theta_2}$ for any $(x,\theta)\in\vY\times\vY^2$, with $\ga\in\F_{P,\ln}$; hence $\be\in\F_{P,\vT}$.  Applying now Corollary \ref{cor1}(ii) we get that there exists a unique pair $(\rho,Q)\in\mathfrak M_+(\vY)\times \M_{S,{\bf Exp}(\Ttheta)}$ satisfying conditions \eqref{ast} and \eqref{martPP}. In particular, it follows from condition \eqref{ast} that $\rho(\theta)=\frac{\theta_1}{\theta_1+\theta_2}$ for $P_\vT$--a.a. $\theta=(\theta_1,\theta_2)\in\vY^2$ and that 
\begin{align*}
Q_\vT(B_1\times B_2)&=\E_P[\chi_{\vT^{-1}(B_1\times B_2)}\cdot\xi(\vT)]\\
&=\int_{B_1\times B_2} \frac{a^{b_1+b_2}\cdot\theta_1^{b_1-1}\cdot\theta_2^{b_2-1}}{\Gamma(b_1)\cdot\Gamma(b_2)}\cdot e^{-a(\theta_1+\theta_2)}\,\lambda_2(d\theta)\\
&=\int_{B_1\times B_2} \frac{a^{b_1+b_2}\cdot\theta_1^{b_1-1}\cdot\theta_2^{b_2-1}}{\Gamma(b_1)\cdot\Gamma(b_2)}\cdot e^{-a(\theta_1+\theta_2)}\,\lambda(d\theta_2)\,\lambda(d\theta_1)\\
&=\left(\int_{B_1} \frac{a^{b_1}\theta_1^{b_1-1}}{\Gamma(b_1)}\cdot e^{-a\theta_1}\,\lambda(d\theta_1)\right)\cdot\left(\int_{B_2} \frac{a^{b_2}\theta_2^{b_2-1}}{\Gamma(b_2)}\cdot e^{-a\theta_2}\,\lambda(d\theta_2)\right)\\
&=(Q_{\vT_1}\otimes Q_{\vT_2})(B_1\times B_2),
\end{align*}
where $B_1,B_2\in\B(\vY)$; hence $Q_{\vT_1}={\bf Ga}(b_1,a)$ and $Q_{\vT_2}={\bf Ga}(b_2,a)$ and  $\vT_1$ and $\vT_2$ are $Q$--independent, implying that  $Q_{\rho(\vT)}={\bf Be}(b_1,b_2)$ and that $S$ is a compound Poisson-Beta process. 
\end{ex}

Another consequence of Theorem \ref{thm!} is the following result which shows that the martingales $L^r:=\{L_t^r\}_{t\in\R_+}$ and the measures $Q^r$ appearing in \cite{scm}, Lemma 8.4, are special instances of the martingales $\wt{M}^{(\ga)}(\theta)$ and the measures $Q_{\theta}$, respectively, of Theorem \ref{thm!}.  Note that Lemma 8.4 of \cite{scm}, was first proven by Dasios \& Embrechts in \cite{daem}, Theorem 10.  To do it, we first need to establish the validity of the following lemma.

\begin{lem}\label{lem35} 
For any $r\in\R_+$ such that $\E_P[e^{rX_1}]<\infty$ and for any $\theta\in{L}_P^c$, let $\kappa_{\theta}(r)$ be the unique solution to the equation  
\begin{equation}\label{lem35a}
M_{X_1}(r)\cdot(M_{\theta})_{W_1}\bigl(-\kappa_{\theta}(r)-c(\theta)\cdot{r}\bigr)=1,
\end{equation}
where $M_{X_1}$ and $(M_{\theta})_{W_1}$ are the moment generating function of $X_1$ and $W_1$ under the measures $P$ and $P_{\theta}$, respectively. (Such a solution exists by e.g. \cite{rss}, Lemma 11.5.1(a)). Define the function $\kappa:D\times\R_+\longrightarrow\R$ by means of $\kappa(\theta,r):=\kappa_{\theta}(r)$ for any $(\theta,r)\in{D}\times\R_+$,
and for fixed $r\in\R_+$ denote by $\kappa_{\vT}(r)$ the random variable defined by the formula
$\kappa_{\vT}(r)(\omega):=\kappa_{\vT(\omega)}(r)$ for any $\omega\in\vO$. Then $\kappa_{\vT}(r)$ is the $P\uph\sigma(\vT)$--a.s. unique solution to the equation 
\begin{equation}\label{lem35b}
M_{X_1}(r)\cdot \E_P\bigl[e^{-\bigl(\kappa_{\vT}(r)+c(\vT)\cdot{r}\bigr)W_1}\mid\vT\bigr]=1\quad{P}\uph\sigma(\vT)\text{--a.s.}.
\end{equation}
\end{lem}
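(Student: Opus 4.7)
The plan is to reduce the conditional-expectation equation \eqref{lem35b} pointwise, by evaluating the conditional expectation through the rcp $\{P_\theta\}_{\theta\in D}$, and then invoke the defining equation \eqref{lem35a} for $\kappa_\theta(r)$ on the $P_\vT$-conull set $L_P^c$.

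First, I would check that the map $\theta\longmapsto\kappa_\theta(r)$ is $\mathfrak B(D)$-measurable, so that $\kappa_\vT(r)$ is a genuine random variable. For fixed $u\in\R$ the function $\theta\longmapsto (M_\theta)_{W_1}(u)=\int e^{uw}\,(P_\theta)_{W_1}(dw)$ is $\mathfrak B(D)$-measurable by property (d1) of a rcp together with a monotone class argument applied to $w\longmapsto e^{uw}$. Hence $(\theta,\kappa)\longmapsto M_{X_1}(r)\cdot(M_\theta)_{W_1}\bigl(-\kappa-c(\theta)r\bigr)$ is a Carath\'eodory function (measurable in $\theta$, continuous and strictly decreasing in $\kappa$ on its domain), so that the unique level-one solution $\kappa_\theta(r)$ depends measurably on $\theta$ by the measurable implicit function theorem.

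Second, I would exploit the identity
\[
\E_P\bigl[e^{-(\kappa_\vT(r)+c(\vT)r)W_1}\mid\vT\bigr](\omega)=\E_{P_{\vT(\omega)}}\bigl[e^{-(\kappa_{\vT(\omega)}(r)+c(\vT(\omega))r)W_1}\bigr]\quad P\uph\sigma(\vT)\text{--a.s.}
\]
This follows from applying \cite{lm1v}, Proposition 3.8 (or Lemma 3.5), to the jointly $\mathfrak B(D)\otimes\mathcal F_1^W$-measurable function $(\theta,\omega)\longmapsto e^{-(\kappa_\theta(r)+c(\theta)r)W_1(\omega)}$, together with the fact that $\{P_\theta\}_{\theta\in D}$ is a rcp of $P$ over $P_\vT$ consistent with $\vT$. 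The right-hand side is precisely $(M_{\vT(\omega)})_{W_1}(-\kappa_{\vT(\omega)}(r)-c(\vT(\omega))r)$.

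Third, for every $\theta\notin L_P$ equation \eqref{lem35a} gives
\[
M_{X_1}(r)\cdot (M_\theta)_{W_1}\bigl(-\kappa_\theta(r)-c(\theta)r\bigr)=1,
\]
and since $L_P$ is $P_\vT$-null, $\vT^{-1}[L_P]$ is a $P\uph\sigma(\vT)$-null set; multiplying the displayed identity of the second step by $M_{X_1}(r)$ yields \eqref{lem35b} on $\vT^{-1}[L_P^c]$. For uniqueness, if $\widetilde\kappa$ is any $\sigma(\vT)$-measurable random variable satisfying \eqref{lem35b}, the same pointwise reduction shows that for $P\uph\sigma(\vT)$-a.e. $\omega$, the number $\widetilde\kappa(\omega)$ solves equation \eqref{lem35a} with $\theta:=\vT(\omega)\in L_P^c$; the pointwise uniqueness granted by \cite{rss}, Lemma 11.5.1(a), then forces $\widetilde\kappa=\kappa_\vT(r)$ $P\uph\sigma(\vT)$-a.s.

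The main technical obstacle I anticipate is establishing measurability of $\theta\longmapsto\kappa_\theta(r)$; everything else is a routine combination of the rcp disintegration formula with the pointwise definition of $\kappa_\theta(r)$.
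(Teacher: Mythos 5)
Your proposal is correct and follows essentially the same route as the paper: the paper also reduces the conditional-expectation equation \eqref{lem35b} pointwise via the rcp (its Claim~\ref{lem35c}, which is exactly the disintegration identity $\E_P[g\mid\vT]=\E_{P_{\bullet}}[u^{\bullet}]\circ\vT$ you invoke from \cite{lm1v}), and then applies \eqref{lem35a} on $L_P^c$ together with pointwise uniqueness. A minor difference worth noting is that you explicitly address $\mf B(D)$-measurability of $\theta\longmapsto\kappa_\theta(r)$ via a Carath\'eodory/measurable-selection argument, a point the paper leaves implicit.
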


\begin{proof}
Note that, according to Proposition \ref{crp}, there exists a $P_{\vT}$-null set $L_P\in\mf{B}(D)$ such that the process $S$ is a $P_{\theta}$--CRP$\bigl(\mathbf{K}(\theta),P_{X_1}\bigr)$ with $P_{X_1}=(P_{\theta})_{X_1}$ for any $\theta\notin{L}_P$. Next we need to establish the following claim.
\begin{clm}\label{lem35c} 
Let $u$ be a $[-\infty,+\infty]$--valued function on $D\times\vY$ such that the integral $\int{u}\,dM$, where $M:=P_{(W_1,\vT)}$, is defined in $[-\infty,+\infty]$, and let $g:=u(W_1,\vT)$. The following hold true:
\begin{enumerate}
\item
The integral $\int\int{u}(w,\theta)\,(P_{\theta})_{W_1}(dw)\,P_{\vT}(d\theta)$ is defined and equal to $\int{u}\,dM$;
\item
$\E_P[g\mid\vT]=\E_{P_{\bullet}}[u^{\bullet}]\circ\vT\quad{P}\uph\sigma(\vT)\text{--a.s.}$;
\item
$\int{g}\,dP=\int\int{u}^{\bullet}\,dP_{\bullet}\,dP_{\vT}$.
\end{enumerate}
\end{clm}

\begin{proof}
Ad (i): For any $\theta\in{D}$ define the probability measure $\wt{P}_{\theta}$ on $\mf{B}(\vY)$ by means of
\[
\wt{P}_{\theta}(A):=(P_{\theta})_{W_1}(A)\quad\mbox{for any}\quad A\in\mf{B}(\vY).
\]
It follows easily that $\{\wt{P}_{\theta}\}_{\theta\in{D}}$ is a product rcp on $\mf{B}(\vY)$ for $M$ with respect to $P_{\vT}$ (see \cite{smm}, Definition 1.1 for the definition). By \cite{lm1v}, Remark 3.4(b), we have 
\[
M(E)=\int\wt{P}_{\theta}(E_{\theta})\,P_{\vT}(d\theta)\quad\mbox{for any}\quad E\in\mf{B}(D\times\vY);
\]
hence (i) holds for $u=\chi_E$. Applying now the standard methods of integration theory one can show that (i) holds first for non-negative $[0,+\infty]$--valued $\mf{B}(D\times\vY)$--measurable 
functions $u$, and then for general functions 
$u=u^+-u^-$, where $u^+$ and $u^-$ are the positive and negative parts of $u$, respectively.

The proof of the statements (ii) and (iii) follow in a similar way as that in \cite{lm1v}, Proposition 3.8, by replacing $id_{\vO}\times{f}$ with $W_1\times \vT$.
\end{proof}

 Due to Claim \ref{lem35c} along with Proposition \ref{crp}, we get that equation \eqref{lem35a} is equivalent to equation \eqref{lem35b}; hence \eqref{lem35b} holds and $\kappa_{\vT}(r)$ is the $P\uph\sigma(\vT)$--a.s. unique solution to \eqref{lem35b}.
\end{proof} 

\begin{prop}\label{Schmidli} 
For any $r\in\R_+$ such that $\E_P[e^{rX_1}]  <\infty$,  and   any $\theta\notin{L}_P$, let $\kappa_{\theta}(r)$ be the unique solution to the equation \eqref{lem35a}, and let $\kappa_{\vT}(r)$ be as in Lemma \ref{lem35}. Fix on arbitrary $r\in\R_+$ as above. 
\begin{enumerate} 
\item
Assume that $Q:=Q^r\in\mathcal{M}_{S,\mathbf{\Lambda}(\rho(\vT))}$ with  	
 \[
Q_{X_1}(B_1):=Q_{X_1}^r(B_1):=\frac{\E_{P}[\chi_{X_1^{-1}[B_1]}\cdot e^{r\cdot X_1}]}{\E_{P}[e^{r \cdot X_1}]}\quad\text{ for any }\,\,B_1\in\B(\vY)
\]
and 
\[
Q_{W_1\mid\vT}(B_2):= Q_{W_1\mid\vT}^r(B_2):={\bf \Lambda (\Ttheta)}:=\frac{\E_{P}[\chi_{W_1^{-1}[B_2]}\cdot e^{-(\kappa_\vT(r)+c(\vT)\cdot r)\cdot W_1}\mid\vT]}{\E_{P}[e^{-(\kappa_\vT(r)+c(\vT)\cdot r)\cdot W_1}\mid\vT]}
\]
for any $B_2\in\B(\vY)$, where the last three equalities hold $P\uph\sigma(\vT)$--a.s.. There exists an essentially unique pair $(\ga,\xi)\in\F_{P,\ln}\times \mathcal R_+(D)$ satisfying  conditions $\gamma(x)=r\cdot x-\ln\E_{P}[e^{r \cdot X_1}]$ for any $x\in\vY$,  and \eqref{mart}, with  
\begin{equation}
M_t^{(\ga)}(\vT)=\xi(\vT)\cdot\wt{M}_t^{(\ga)}(\vT)\;\; P\uph\sigma(\vT)-\mbox{a.s.},
\label{sch1}
\end{equation}
where
\[
\wt{M}_t^{(\ga)}(\vT)=e^{r\cdot S_t-(\kappa_\vT(r)+c(\vT)\cdot r)\cdot T_{N_t} +\ln \E_P[e^{r\cdot X_1}]}\cdot \frac{\int^{\infty}_{t-T_{N_t}} e^{-(\kappa_\vT(r)+c(\vT)\cdot r)\cdot w} \, P_{W_1\mid\vT}(dw)}{1-{\bf{K}}(\vT )(t-T_{N_t})}.
\]  	 
\item
Conversely, for any pair $(\ga,\xi)$ with $\xi\in\mathcal{R}_+(D)$ and $\gamma(x):=r\cdot x-\ln\E_{P}[e^{r \cdot X_1}]$ for any $x\in\vY$ there exists a unique probability measure $Q:=Q^r\in\M_{S,{\bf\Lambda}(\rho(\vT))}$   determined by  condition \eqref{mart} with $M^{(\ga)}(\vT)$ fulfilling condition \eqref{sch1}.
\item
In both cases (i) and (ii) there exist an essentially unique rcp $\{Q_{\theta}\}_{\theta\in D}:=\{Q_{\theta}^r\}_{\theta\in{D}}$ of $Q$ over $Q_\vT$  consistent with $\vT$ and a $P_\vT$-null set $L_{\ast\ast}\in\B(D)$, satisfying for any $\theta\notin L_{\ast\ast}$ conditions $Q_\theta\in{\M}_{S,{\bf \Lambda}(\ttheta)}$ and \eqref{rcp2}, with
\[
\wt M_t^{(\ga)}(\theta) =e^{r\cdot S_t-(\kappa_\theta(r)+c(\theta)\cdot r)\cdot T_{N_t} +\ln \E_P[e^{r\cdot X_1}]}\cdot \frac{\int^{\infty}_{t-T_{N_t}} e^{-(\kappa_\theta(r)+c(\theta)\cdot r)\cdot w} \, (P_\theta)_{W_1}(dw)}{1-{\bf{K}}(\theta)(t-T_{N_t})}.
\]
\end{enumerate}

In particular, if $P_{W_1}$ is absolutely continuous with respect to the Lebesgue measure $\lambda$ restricted to $\mf{B}([0,1])$, then the martingale $L^r(\theta):=\{L^r_t(\theta)\}_{t\in\R_+}$ for $r\in\R_+$, appearing in \cite{scm}, Lemma 8.4, coincides with the martingale $\wt{M}^{(\ga)}(\theta)$ for any $\theta\notin{L}_{\ast\ast}$, and for any $t\in\R_+$ condition 
\[
M_t^{(\ga)}(\vT)=\xi(\vT)\cdot L_t^r(\vT)
\]
holds $P\uph\sigma(\vT)$--a.s. true.
\end{prop}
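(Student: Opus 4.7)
The plan is to read the whole proposition off Theorem~\ref{thm!} after making the Radon-Nikod\'{y}m data explicit for the two Esscher-type tilts defining $Q^r$, and to obtain the final identification with Schmidli's $L^r$ by comparing the resulting expression with the one in \cite{scm}, Lemma 8.4.

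First I would handle the $X_1$-side: since $dQ_{X_1}^r/dP_{X_1}(x) = e^{r\cdot x}/M_{X_1}(r)$, Theorem~\ref{thm!}(i) forces, up to $P_{X_1}$-null sets, the function $\ga$ in the pair $(\ga,\xi)\in\F_{P,\ln}\times\mathcal{R}_+(D)$ to satisfy $\ga(x) = r\cdot x - \ln M_{X_1}(r)$. Next I would handle the $W_1$-side: by Lemma~\ref{lem35}, $\E_P[e^{-(\kappa_\vT(r)+c(\vT)\cdot r)W_1}\mid\vT] = M_{X_1}(r)^{-1}$ holds $P\uph\sigma(\vT)$-a.s., so the denominator in the definition of $\mathbf{\Lambda}(\rho(\vT))$ is constant and the tilt simplifies to
\[
\frac{d\mathbf{\Lambda}(\rho(\vT))}{d\mathbf{K}(\vT)}(w) = M_{X_1}(r)\cdot e^{-(\kappa_\vT(r)+c(\vT)\cdot r)w}
\quad P\uph\sigma(\vT)\text{-a.s.}
\]
Plugging both Radon-Nikod\'{y}m derivatives into the formula for $\wt{M}_t^{(\ga)}(\vT)$ supplied by Lemma~\ref{lem2}, the product $\prod_{j=1}^{N_t}e^{\ga(X_j)}\cdot(d\mathbf{\Lambda}(\rho(\vT))/d\mathbf{K}(\vT))(W_j)$ telescopes: the $N_t$ copies of $M_{X_1}(r)$ coming from $e^{\ga(X_j)}$ and from the tilt cancel pairwise, leaving $\exp\{r\cdot S_t - (\kappa_\vT(r)+c(\vT)\cdot r)T_{N_t}\}$. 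The survival ratio then contributes the extra factor $M_{X_1}(r)$ together with $\int_{t-T_{N_t}}^\infty e^{-(\kappa_\vT(r)+c(\vT)\cdot r)w}\,P_{W_1\mid\vT}(dw)$ divided by $1-\mathbf{K}(\vT)(t-T_{N_t})$, and the whole expression assembles into exactly the claimed form of $\wt{M}_t^{(\ga)}(\vT)$. Thus (i) and (ii) reduce to direct translations of Theorem~\ref{thm!}(i)-(ii).

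For (iii) I would invoke Theorem~\ref{thm!}(iii) to obtain the essentially unique rcp $\{Q_\theta\}_{\theta\in{D}}$ and the $P_\vT$-null set $L_{\ast\ast}$, and then repeat the Radon-Nikod\'{y}m identification pointwise on $L_P^c$: by Proposition~\ref{crp} one has $(P_\theta)_{W_1}=\mathbf{K}(\theta)$ and $(P_\theta)_{X_1}=P_{X_1}$ there, and Lemma~\ref{lem35} gives the identical tilt identity with $\theta$ in place of $\vT$, producing the stated fixed-$\theta$ formula for $\wt{M}_t^{(\ga)}(\theta)$. For the concluding comparison, under the assumption that $P_{W_1}$ is absolutely continuous with respect to $\lambda$, the surviving ratio $\int_{t-T_{N_t}}^\infty e^{-(\kappa_\theta(r)+c(\theta)\cdot r)w}(P_\theta)_{W_1}(dw)/(1-\mathbf{K}(\theta)(t-T_{N_t}))$, together with the prefactor $\exp\{r\cdot S_t - (\kappa_\theta(r)+c(\theta)\cdot r)T_{N_t} + \ln M_{X_1}(r)\}$, is precisely the expression defining $L_t^r(\theta)$ in \cite{scm}, Lemma 8.4, so $L_t^r(\theta)=\wt{M}_t^{(\ga)}(\theta)$ on $L_{\ast\ast}^c$, and then \eqref{sch1} yields $M_t^{(\ga)}(\vT)=\xi(\vT)\cdot L_t^r(\vT)$ $P\uph\sigma(\vT)$-a.s.

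The delicate step is the $M_{X_1}(r)$ bookkeeping: the $N_t$ copies arising from $\ga$ and from $d\mathbf{\Lambda}/d\mathbf{K}$ must cancel completely, leaving exactly one surviving factor produced by the tail of $\mathbf{\Lambda}(\rho(\vT))$, so that a single $+\ln M_{X_1}(r)$ appears in the exponent. Lemma~\ref{lem35} is the hinge that forces the conditional Laplace transform $\E_P[e^{-(\kappa_\vT(r)+c(\vT)\cdot r)W_1}\mid\vT]$ to equal the clean reciprocal $M_{X_1}(r)^{-1}$; without that identity the two tilted derivatives would not produce matching powers of $M_{X_1}(r)$ and the final formula would fail to collapse to the announced shape.
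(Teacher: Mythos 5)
Your proof is correct and follows the same route as the paper: reduce all three parts to Theorem~\ref{thm!}, use Lemma~\ref{lem35} to identify the conditional Laplace transform $\E_P[e^{-(\kappa_\vT(r)+c(\vT)\cdot r)W_1}\mid\vT]$ with $M_{X_1}(r)^{-1}$, and then carry out the bookkeeping of $M_{X_1}(r)$ factors to obtain the stated form of $\wt{M}_t^{(\ga)}$, finishing with the comparison to $L^r$ from \cite{scm}. You spell out more explicitly the ``standard computations'' that the paper only alludes to, but the two arguments are substantively identical.
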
 		 			

\begin{proof}
Ad (i): Note that according to Lemma \ref{lem35}, $\kappa_{\vT}(r)$ is the $P\uph\sigma(\vT)$-a.s. unique solution to the equation \eqref{lem35b}.  According to Theorem \ref{thm!}(i), there exists an
essentially unique pair	$(\ga,\xi)\in\F_{P,\ln}\times \mathcal R_+(D)$ as in (i) with condition \eqref{sch1} following by \eqref{lem35a} and standard computations. 

Ad (ii): An easy computation justifies that $\ga\in\F_{P,\ln}$; hence we may apply Theorem \ref{thm!}(ii), in order to obtain a unique probability measure   $Q:=Q^r\in\M_{S,{\bf\Lambda}(\rho(\vT))}$  determined by  condition \eqref{mart} with the martingale $M^{(\ga)}(\vT)$ fulfilling condition \eqref{sch1}. 

Ad (iii): By Theorem \ref{thm!}(iii) there exist an essentially unique rcp $\{Q_{\theta}\}_{\theta\in D}:=\{Q_{\theta}^r\}_{\theta\in{D}}$ of $Q$ over $Q_\vT$  consistent with $\vT$ and a $P_\vT$-null set $L_{\ast\ast}\in\B(D)$, satisfying for any $\theta\notin L_{\ast\ast}$ conditions $Q_\theta\in{\M}_{S,{\bf \Lambda}(\ttheta)}$ and \eqref{rcp2}, while condition \eqref{rcp2} follows by \eqref{lem35a} and standard computations. 

In particular, if $P_{W_1}$ is absolutely continuous with respect to the Lebesgue measure $\lambda$ restricted to $\mf{B}([0,1])$, then for arbitrary but fixed $r\in\R_+$ and $\theta\notin{L}_{\ast\ast}$, condition ($RRM_{\theta}$) along with condition \eqref{res1} yields 
\begin{align*}
\wt{M}^{(\ga)}_t(\theta) &= e^{r\cdot(u+c(\theta)\cdot t-r^u_t(\theta))-(\kappa_\theta(r)+c(\theta)\cdot r)\cdot T_{N_t} +\ln\E_P[e^{r\cdot X_1}]}\\
&  \cdot \frac{\int^{\infty}_{t-T_{N_t}} e^{-(\kappa_\theta(r)+c(\theta)\cdot r)\cdot w} \, (P_\theta)_{W_1}(dw)}{1-{\bf{K}}(\theta)(t-T_{N_t})}\\
&= e^{-r\cdot(r^u_t(\theta)-u) +(\kappa_\theta(r)+c(\theta)\cdot r)\cdot (t-T_{N_t}) -\kappa_\theta(r)\cdot t +\ln\E_P[e^{r\cdot X_1}]}\\
&  \cdot \frac{\int^{\infty}_{t-T_{N_t}} e^{-(\kappa_\theta(r)+c(\theta)\cdot r)\cdot w} \, (P_\theta)_{W_1}(dw)}{1-{\bf{K}}(\theta)(t-T_{N_t})}\\
 		&= L^r_t(\theta);
\end{align*}
 		hence $\wt{M}_t^{(\ga)}(\theta)=L_t^r(\theta)$. The latter, together with Lemma \ref{lem35}, implies $\wt{M}_t^{(\ga)}(\vT)=L_t^r(\vT)\;\; P\uph\sigma(\vT)$-a.s.; hence taking into account condition \eqref{sch1} we obtain $M_t^{(\ga)}(\vT)=\xi(\vT)\cdot L_t^r(\vT)\;\; P\uph\sigma(\vT)$-a.s..
\end{proof}

Finally, it is worth noticing, that for any $\theta\notin{L}_{\ast\ast}$ the two martingales $\wt{M}^{(\ga)}(\theta)$ and $L^r(\theta)$ were constructed with a totally different way. The construction of $L^r(\theta)$ for $r\geq0$ and $\theta\notin{L}_{\ast\ast}$ requires the Markovisation of the reserve process $r^u(\theta)$ $(u\in\R_+)$ and then uses the theory of piecewise deterministic Markov processes (cf. e.g. \cite{scm}, Chapter 8), while the construction of $\wt{M}^{(\ga)}(\theta)$ is simple, since it is a result of a rather elementary proof consisting of standard measure theoretic arguments. Note also that the above proposition extends Lemma 8.4 of \cite{scm} to CMRPs. Applications of Proposition \ref{Schmidli} to the ruin problem for CMRPs are presented in \cite{t1}.

 \end{document}